\title{Asymptotic Distribution of Low-Dimensional Patterns Induced by Non-Differentiable Regularizers under General Loss Functions}
\author[1]{Ivan Hejný}
\author[1]{Jonas Wallin}
\author[2]{Małgorzata Bogdan}
\affil[1]{Department of Statistics, Lund University}
\affil[2]{Institute of Mathematics, University of Wroclaw}
\date{}
\titleformat*{\section}{\bfseries}
\titleformat*{\subsection}{\small\bfseries}
\theoremstyle{plain}
\newtheorem{theorem}{Theorem}[section]
\newtheorem{proposition}[theorem]{Proposition}
\newtheorem{lemma}[theorem]{Lemma}
\newtheorem{corollary}[theorem]{Corollary}
\theoremstyle{definition}
\newtheorem{definition}[theorem]{Definition}
\newtheorem{example}[theorem]{Example}
\newtheorem{remark}[theorem]{Remark}
\begin{document}

\maketitle

\begin{abstract}
\noindent
This article investigates the asymptotic distribution of penalized estimators with non-differentiable penalties designed to recover low-dimensional pattern structures. Patterns play a central role in estimation, as they reveal the underlying structure of the parameter — which coefficients are zero, which are equal, and how they are clustered.

The main technical challenge stems from the discontinuous nature of these patterns (such as the sign function in the case of the Lasso penalty), a difficulty not previously addressed in the literature and only recently analyzed for the standard linear model. To overcome this, we extend classical results from empirical process theory for $M$-estimation by incorporating the distributional behavior of model patterns.

We introduce a new mathematical framework for studying pattern convergence of regularized $M$-estimators. While classical approaches to distributional convergence rely on uniform conditions, our analysis employs a new local condition, \emph{stochastic Lipschitz differentiability} (SLD), which controls fluctuations of the Taylor remainder.

We demonstrate how this framework applies to a broad class of loss functions, covering generalized linear models (e.g., logistic and Poisson regression) and robust regression settings with non-smooth losses such as the Huber and quantile loss.

\end{abstract}

\section{Introduction}

Regularization plays an important role in modern statistics. A key goal is uncovering low-dimensional structures within complex datasets. This pursuit not only facilitates data compression but also enhances estimation accuracy and predictive performance by reducing the number of estimable parameters.

This paper explores how convex optimization methods can reveal such structures. We consider $M$-estimators of the form
\begin{equation}\label{main objective intro}
     \hat{\theta}_n = \underset{\theta\in\Theta}{\operatorname{argmin}} \hspace{0.1cm} \frac{1}{n}\sum_{i=1}^{n} \ell(y_i,x_i, \theta) + n^{-1/2}f(\theta),
\end{equation}
where $(y_i,x_i)$ are i.i.d. observations with $y_i \in \mathbb{R}$ and $x_i \in \mathbb{R}^p$, $\ell$ is a loss function, and $f$ a suitable regularizer. We focus on regularizers such as the Lasso, Fused Lasso, or SLOPE, each designed to identify specific low-dimensional structures \cite{tibshirani1996regression, Tib2005fusedLasso, tibshirani2011solution, bogdan2015slope, figueiredo16}: the Lasso promotes sparsity, the Fused Lasso detects contiguous groups of equal coefficients, and SLOPE detects clusters of equal coefficients (including zeros) regardless of order \cite{schneider2022geometry, Skalski2022pattern, BOGDAN_pattern, graczyk2023pattern, hejny2025unveiling}. We refer to these lower-dimensional structures as \emph{patterns}; see Section~\ref{sec:Pconv} for a precise definition. 

Regularized generalized linear and robust regression models have been extensively studied in recent years. Algorithms for regularized logistic and multinomial regression were developed in \cite{friedman2010regularization} and extended to general GLMs in \cite{tay2023elastic}. Various loss–penalty pairs were analyzed in \cite{rosset2007piecewise}, including the “Huberized” Lasso, and it was shown that all ``almost quadratic'' losses paired with $\ell_1$ regularization yield piecewise-linear solution paths. Moreover, penalized quantile regression has been investigated in \cite{wu2009variable,li2008quantile}, where model consistency and oracle properties were established for SCAD and adaptive Lasso in fixed dimensions. Further analysis of $\ell_1$-penalized quantile regression was provided in \cite{belloni2011l1}, and \cite{fan2014adaptive} proposed an adaptive robust variable selection method with a weighted $\ell_1$ penalty and model selection oracle guarantees.

In this paper we fix the dimension $p$ and derive an asymptotic framework in which the $\sqrt{n}$-rescaled estimation error admits a non-degenerate limiting distribution for a large class of loss functions.

One classical approach for proving convergence in distribution for general loss functions uses Pollard’s notion of stochastic differentiability~\cite{pollard_1985}, which guarantees that the empirical process behaves smoothly in an average sense around the true parameter. However, this condition is insufficient for establishing distributional convergence of patterns.

We introduce the concept of \emph{stochastic Lipschitz differentiability} (SLD), a sufficient condition for the desired pattern convergence of the estimator \eqref{main objective intro}.

We then show that the SLD condition holds for several important model classes, including logistic and Poisson regression, as well as robust multiple regression losses such as the Huber~\cite{Huber1964} and quantile loss~\cite{koenker1978regression}.

The main results of the paper are Theorem~\ref{main pattern robust theorem in distribution} and Theorem~\ref{main pattern robust theorem}. These extend Pollard’s Central Limit Theorem to penalized $M$-estimators, and unify several existing asymptotic theorems in the literature. In particular, Theorem~\ref{main pattern robust theorem in distribution} encompasses classical results such as Theorem~1 in~\cite{Tib2005fusedLasso} and Theorem~2 in~\cite{fu2000asymptotics} as special cases. Notably, these earlier works establish asymptotic distributions for the Lasso and Fused Lasso but do not address the crucial question of distributional pattern convergence, which was resolved only recently for the linear model in our previous work~\cite{hejny2025unveiling}. Using the SLD condition, Theorem~\ref{main pattern robust theorem} extends the pattern convergence to a broader class of loss functions.

The remainder of the paper is organized as follows:
Section 1.1 provides a brief overview of the asymptotics, while Section 1.2 sets up the model assumptions and notation. Section 2 develops a hierarchy of stochastic differentiability notions. Section 3 establishes the main result of the paper. In Theorem \ref{main pattern robust theorem in distribution} we derive the asymptotic distribution of \eqref{main objective intro}, in Theorem \ref{main pattern robust theorem} the distribution of its pattern, and finally in Corollary \ref{corollary of the main pattern theorem} we formulate the results under stronger but more easily verifiable assumptions.  Section 4 analyzes pattern recovery. Section 5 verifies the assumptions for key loss families: GLMs, Huber, and quantile regression. Section 6 presents
simulations for SLOPE-regularized logistic regression, and Section 7 concludes with a discussion.

\subsection{Overview of the asymptotics}
We distinguish between four notions of differentiability: regular, stochastic, Lipschitz stochastic, and local, forming a hierarchy illustrated in Figure~\ref{fig:diamond-graph}.

\begin{figure}[ht]
    \centering
    \begin{tikzpicture}
        \node (B) at (0, 3) {``regular'' (\ref{regularity conditions})}; 
        \node (A) at (-2, 1.5) {``stochastic'' (\ref{stochastic differentiability})}; 
        \node (C) at (2, 1.5) {``SLD'' (\ref{stochastic Lipschitz differentiability condition})}; 
        \node (D) at (0, 0) {``local'' (\ref{local stochastic differentiability})}; 
  
        \draw[black,-{latex}] (B) -- (A);
        \draw[black,-{latex}] (B) -- (C);
        \draw[black,-{latex}] (A) -- (D);
        \draw[black,-{latex}] (C) -- (D);

    \end{tikzpicture}
    \caption{Different notions of stochastic differentiability}
    \label{fig:diamond-graph}
\end{figure}
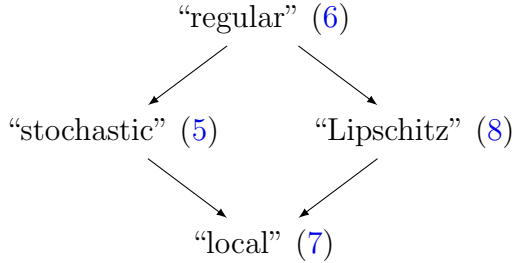

Classical conditions ensuring the asymptotic normality of the unpenalized estimator $\hat{\theta}_n$ in \eqref{main objective intro} (with $f = 0$) typically require that the loss be twice continuously differentiable. We formulate these conditions in \eqref{regularity conditions}. Under these conditions, if $\hat{\theta}_n$ is consistent for $\theta_0$ and the Hessian, $C$, of $G(\theta)$ at $\theta_0$ is non-singular, then the scaled estimation error $\sqrt{n}(\hat{\theta}_n - \theta_0)$ is asymptotically normal; see for example Theorem~2 in \cite{pollard_1985}. Further background and references can be found in Chapter~5 of \cite{van1998asymptotic} and Chapter~3.2 of \cite{vanderVaart1996}.

Under the same assumptions, a corresponding result holds for a broad class of penalized problems ($f\neq 0$). In this case, the scaled estimation error $\sqrt{n}(\hat{\theta}_n - \theta_0)$ converges in distribution to the minimizer of the strictly convex random function
\begin{equation*}
    V(u) = \frac{1}{2} u^T C u - u^T W + f'(\theta_0; u),
\end{equation*}
where $W$ is a zero mean multivariate Gaussian random variable and $f'(\theta_0; u)$ denotes the directional derivative of $f$ at $\theta_0$ in the direction $u$.
The classical differentiability conditions on $\ell$ are relatively concrete and straightforward to verify, yet they remain restrictive in practice: they exclude many important losses that are not twice differentiable, such as the robust Huber and quantile losses.

However, these regularity conditions are not necessary for asymptotic normality in the unpenalized case, nor for convergence in distribution of the rescaled error 
when a penalty $f$ is included. A considerably weaker and more general requirement is Pollard’s notion of stochastic differentiability of the loss $\ell$ 
at $\theta_0$, combined with twice continuous differentiability of the population risk $G(\theta)$ in a neighborhood of $\theta_0$; see Theorem 2 in \cite{pollard_1985}. Importantly, the stochastic differentiability condition accommodates several practically relevant losses that violate twice differentiability, including the robust Huber and quantile losses.

While these asymptotic results describe the distribution of $\hat{\theta}_n$, they do not automatically guarantee convergence at the level of patterns induced by $f$. A key question is whether the distributional convergence of $\sqrt{n}(\hat{\theta}_n-\theta_0)$ translates into distributional convergence of the pattern. This is not obvious and requires careful, separate treatment. Because patterns are discontinuous, the continuous mapping theorem does not apply.

For penalties of polyhedral type, $f(\theta)=\max\{v_1^T\theta,\dots,v_k^T\theta\} $,
where $v_1,\dots,v_k$ are regularizer-specific vectors in $\mathbb{R}^p$, distributional convergence of patterns was established for the linear model in \cite{hejny2025unveiling}. Stochastic Lipschitz differentiability extends the convergence of patterns induced by polyhedral regularization to general models.

\begin{subsection}{Model assumptions and notation}
We first reformulate the main objective \eqref{main objective intro} in a more general framework, indicating that our methods are applicable beyond regression, such as to graphical models. Let $\{\mathbb{P}_{\theta}: \theta\in\Theta\}$ be a family of probability measures on $\mathbb{R}^d$, indexed by the set of parameters $\Theta\subset\mathbb{R}^p$. Consider  
\begin{equation}\label{main objective}
     \hat{\theta}_n = \underset{\theta\in\Theta}{\operatorname{argmin}} \hspace{0.1cm} \frac{1}{n}\sum_{i=1}^{n} \ell(X_i, \theta) + n^{-1/2}f(\theta),
\end{equation} 
where \(X_1,\dots,X_n\) are i.i.d.\ random observations on \(\mathbb{R}^d\) sampled from a probability measure \(\mathbb{P}\), which typically equals $\mathbb{P}_{\theta}$ for some  $\theta\in\Theta$. However, we also allow for misspecified distributions $\mathbb{P}$ outside of the model.

We study the estimator \(\hat{\theta}_n\) defined in \eqref{main objective}, where 
\(f\) is an admissible convex penalty of polyhedral type (specified later in \eqref{penalty form}), 
and \(\mathcal{F} = \{\ell(x, \theta) : \theta \in \Theta\}\) is the family of loss functions under consideration.

We denote the population risk by
\begin{equation*}
    G(\theta) := \mathbb{E}[\ell(X, \theta)].
\end{equation*}

Throughout the paper we assume the following mild baseline conditions:
\begin{itemize}
    \item For every \(\theta\in\Theta\), the map \(x\mapsto\ell(x,\theta)\) is Borel-measurable.
    \item For every \(x\in\mathbb{R}^d\), the map \(\theta\mapsto\ell(x,\theta)\) is continuous on \(\Theta\).
    \item The population risk \(G(\theta)\) admits a unique minimizer \(\theta_0\), and there exists an open neighborhood \(B\subset\mathbb{R}^p\) with \(\theta_0\in B\subset\Theta\).
\end{itemize}

In what follows, we study conditions on the family \(\mathcal{F}\) under which the rescaled estimator
\[
    \hat{u}_n := \sqrt{n}\,(\hat{\theta}_n - \theta_0)
\]
converges in distribution to some random vector \(\hat{u}\), and when this convergence extends to the stronger notion of pattern convergence as defined in \eqref{pattern convergence}.  We denote
\begin{align*}
    G_n(\theta) &= \frac{1}{n}\sum_{i=1}^n \ell(X_i, \theta), 
\end{align*}
and the empirical process associated with \(\mathcal{F}\) by
\[
    \nu_n \ell(\cdot,\theta)
    := n^{-1/2} \sum_{i=1}^n \big(\ell(X_i, \theta) - G(\theta)\big).
\]
We conclude this section by recalling several basic notions from convex analysis that will be used throughout the paper. 
For any set \(A \subseteq \mathbb{R}^p\), we denote by 
\(\operatorname{con}(A)\) its convex hull, 
\(\operatorname{span}(A)\) its linear span, 
\(\operatorname{par}(A) := \operatorname{span}(\{u - v : u, v \in A\})\) its parallel space, 
\(\operatorname{aff}(A) := x_0 + \operatorname{par}(A)\) its affine hull (for any \(x_0 \in A\)), 
and \(\operatorname{ri}(A)\) its relative interior, i.e., the interior of \(A\) within \(\operatorname{aff}(A)\) equipped with the subspace topology of \(\mathbb{R}^p\). 

For a convex function \(f : \mathbb{R}^p \to \mathbb{R}\), we define its directional derivative at \(x\) in direction \(u\) by
\[
    f'(x; u) := \lim_{\varepsilon \downarrow 0} \frac{f(x + \varepsilon u) - f(x)}{\varepsilon},
\]
which is itself a convex function of \(u\).
The subdifferential of \(f\) at \(x\) is given by
\[
    \partial f(x) := \{ v \in \mathbb{R}^p : f(y) \ge f(x) + \langle v, y - x \rangle \ \forall y \in \mathbb{R}^p \}.
\]

\end{subsection}

\section{Stochastic differentiability}

In this section, we develop the different notions of stochastic differentiability, summarized in Figure \ref{fig:diamond-graph}. We begin by formulating classical differentiability conditions on $\ell$. Let $B$ be an open neighborhood of $\theta_0$ such that the following regularity conditions hold:
\begin{align}\label{regularity conditions}
    \text{R1)}&\hspace{0.5cm} \text{the loss function}, \;\; \theta \mapsto \ell(x, \theta) \text{ is } C^2 \text{ on } B \text{ for all } x. \nonumber\\
    \text{R2)}&\hspace{0.5cm} \Vert \ell''(x, \theta) \Vert \leq M(x) \text{ for } \theta \in B, 
    \text{ where } M(x) \text{ satisfies } \mathbb{E}[M(X)^2] < \infty. \nonumber\\
    \text{R3)}&\hspace{0.5cm} \mathbb{E}[\ell'(X, \theta_0)\ell'(X, \theta_0)^{T}] < \infty.
\end{align}
The above conditions together with consistency and non-singularity of the Hessian of $G(\theta)$ at $\theta_0$ guarantee weak convergence of the rescaled error $\sqrt{n}(\hat{\theta}_n-\theta_0)$; see Theorem~\ref{main pattern robust theorem in distribution}. For a detailed discussion of these assumptions, we refer to Section~4 of \cite{pollard_1985}. Our goal now is to relax the twice-differentiability requirement.
We start with the assumption that the loss can be expanded as
\begin{equation}\label{Taylor expansion of loss}
\ell(x,\theta)=\ell(x,\theta_0)+(\theta-\theta_0)\triangle(x,\theta_0)+\Vert \theta-\theta_0\Vert r(x,\theta),
\end{equation}
where $x\mapsto\triangle(x, \theta_0)$ is a Borel measurable function, which does not depend on $\theta$, and $r(x, \theta)$ a rest term. If $\theta\mapsto\ell(x,\theta)$ is differentiable at $\theta_0$, \eqref{Taylor expansion of loss} is the first order Taylor expansion of $\ell(x,\theta)$ around $\theta_0$, with $\triangle(x,\theta_0)=\ell'(x,\theta_0)$.
Pollard \cite{pollard_1985} shows how the rather weak assumptions of twice differentiability of $G(\theta)$ together with a condition called stochastic differentiability 
are sufficient for asymptotic normality of the minimizer of $G_n(\theta)$.

\begin{definition}
A function $\ell(\cdot,\theta)$ of the form (\ref{Taylor expansion of loss}) is called \textit{stochastically differentiable} at $\theta_0$ if for any sequence of balls $U_n$ around $\theta_0$ that is shrinking toward $\theta_0$: 
\begin{equation}\label{stochastic differentiability}
    \underset{\theta\in U_n}{\operatorname{sup}}\hspace{0,1cm} \dfrac{\vert\nu_n r(\cdot,\theta)\vert}{1+n^{1/2}\Vert\theta-\theta_0\Vert}\xrightarrow[n\rightarrow\infty]{p} 0,
\end{equation}
where $\nu_n r(\cdot,\theta)$ denotes the empirical process of the rest term.
\end{definition}

Paraphrasing David Pollard \cite{pollard_1985}, the most useful consequence of stochastic differentiability is that $\nu_n r(\cdot,\theta_n)=o_p(1+n^{1/2}\Vert\theta_n-\theta_0\Vert)$ for any sequence of random vectors $\theta_n$ converging in probability to $\theta_0$. In particular, if $\theta_n=\theta_0+O_p(n^{-1/2})$, then $\nu_n r(\cdot,\theta_n)=o_p(1+O_p(1))=o_p(1)$. This uniform control over the rest term, resulting from stochastic differentiability, is used to obtain asymptotic normality of (non-regularized) $M-$ estimators in Theorem 2 \cite{pollard_1985}.

Stochastic differentiability is a relatively weak condition. It is shown in \cite{pollard_1985} Section 4. that if there is an open neighborhood $B\subset\Theta$ around $\theta_0$ such that conditions R1,R2,R3 in \eqref{regularity conditions} hold, then $G(\theta)$ is twice differentiable at $\theta_0$ and $\ell(x,\theta)$ is stochastically differentiable \footnote{For stochastic differentiability, in R2) it suffices to assume existence of the first moment $\mathbb{E}[M(X)]<\infty$, see \cite{pollard_1985}. For the stronger SLD we assume $\mathbb{E}[M(X)^2]<\infty$.} at $\theta_0$. However, stochastic differentiability is a strictly weaker condition than regularity conditions (\ref{regularity conditions}). It does not require twice differentiability of the loss and holds, for example, for the Huber and the Median loss, see Section 5 in \cite{pollard_1985}. In Theorem~\ref{main pattern robust theorem in distribution}, which is a regularized extension of the central limit theorem in \cite{pollard_1985} Theorem 2., we replace stochastic differentiability with an even slightly weaker, local condition;
\begin{definition}
    We shall call a function $\ell(\cdot, \theta)$ of the form (\ref{Taylor expansion of loss}) \textit{locally stochastically differentiable} at $\theta_0$ if for every compact $K\subset \mathbb{R}^p$,
\begin{equation}\label{local stochastic differentiability}
    \underset{\theta\in \theta_0+K/\sqrt{n}}{\operatorname{sup}}\hspace{0,1cm} \vert\nu_n r(\cdot,\theta)\vert\xrightarrow[n\rightarrow\infty]{p} 0.
\end{equation}
\end{definition}
Stochastic differentiability (\ref{stochastic differentiability}) implies local stochastic differentiability  (\ref{local stochastic differentiability}) by setting $U_n=\theta_0+K/\sqrt{n}$, see appendix \ref{appendix local stochastic differentiability is weak}.

For the purpose of pattern convergence, we impose a slightly stronger regularity condition on the loss than the condition (\ref{local stochastic differentiability});

\begin{definition}\label{stochastic Lipschitz differentiability definition}
    We shall call a function $\ell(\cdot,\theta)$ of the form (\ref{Taylor expansion of loss}) \textit{stochastically Lipschitz differentiable} (SLD) at $\theta_0$ if for any compact $K\subset\mathbb{R}^p$, there is a sequence of random variables $L_n=o_p(1)$ such that $\tilde{r}(\cdot,\theta)=\Vert\theta-\theta_0\Vert r(\cdot,\theta)$ satisfies
\begin{equation}\label{stochastic Lipschitz differentiability condition}
\sup_{\overset{\theta_1,\theta_2\in\theta_0+K/\sqrt{n}}{\theta_1\neq\theta_2}}\dfrac{\vert\nu_n(\tilde{r}(\cdot,\theta_1)-\tilde{r}(\cdot,\theta_2))\vert }{\Vert\theta_1-\theta_2\Vert}\leq L_n,
\end{equation}
\end{definition}

We see that (\ref{stochastic Lipschitz differentiability condition}) implies (\ref{local stochastic differentiability}), because 
\begin{equation*}
\Vert\theta-\theta_0\Vert\cdot\sup_{\theta\in\theta_0+K/\sqrt{n}}\vert\nu_nr(\cdot,\theta)\vert=\sup_{\theta\in\theta_0+K/\sqrt{n}}\vert\nu_n(\tilde{r}(\cdot,\theta)-\tilde{r}(\cdot,\theta_0))\vert\leq L_n \Vert\theta-\theta_0\Vert.
\end{equation*}
The relations between regularity conditions (\ref{regularity conditions}) and the three notions of stochastic differentiability (\ref{stochastic differentiability}), (\ref{local stochastic differentiability}), (\ref{stochastic Lipschitz differentiability condition}) are summarized in Figure~\ref{fig:diamond-graph}.

\begin{remark}\label{remark sufficient Lipschitz condition}
If the map $\theta\mapsto\ell(x,\theta)$ is differentiable in a neighborhood $B$ around $\theta_0$ for every $x$ and 
\begin{equation*}
    \Vert\ell'(x,\theta_1)-\ell'(x,\theta_2)\Vert\leq M(x) \Vert \theta_1-\theta_2\Vert,
\end{equation*}
for $\theta_1,\theta_2\in B$ and some $M$ with finite second moment $\mathbb{E}[M(X)^2]<\infty$, then the SLD condition (\ref{stochastic Lipschitz differentiability condition}) holds. For proof of this fact, we refer the reader to appendix \ref{appendix stochastic Lipschitz differentiability for the Huber loss}. In particular, if the regularity conditions (\ref{regularity conditions}) hold, then 
\begin{align*}
    \Vert\ell'(x,\theta_1)-\ell'(x,\theta_2)\Vert&\leq \Vert \ell''(x,\theta)\Vert \Vert \theta_1-\theta_2\Vert\\
    &\leq M(x)  \Vert \theta_1-\theta_2\Vert,
\end{align*}
where $\mathbb{E}[M(X)^2]<\infty$. Consequently, regularity conditions in (\ref{regularity conditions}) imply SLD.
\end{remark}

The subsequent Lemma follows from similar arguments as the proof of Theorem 2 in \cite{pollard_1985}. 

\begin{lemma}\label{Jonas lemma} Assume $G(\theta)$ has a non-singular Hessian $C=\nabla^2\vert_{\theta=\theta_0}G(\theta)$ at its minimizing value $\theta_0$ and $\ell(\cdot,\theta)$ is locally stochastically differentiable at $\theta_0$ (i.e. \eqref{local stochastic differentiability} holds).  Then
\begin{equation*}
    n\left(G_n\left(\theta_0+\dfrac{u}{\sqrt{n}}\right)-G_n(\theta_0)\right)=\dfrac{1}{2}u^TCu+u^T\nu_n\triangle(\cdot, \theta_0)+R_n(u),
\end{equation*}
where $\sup_{u\in K}\vert R_n(u)\vert\rightarrow 0$ in probability as $n\rightarrow\infty$, for any compact set $K$. Further, if $\ell(\cdot,\theta)$ is stochastically Lipschitz differentiable at $\theta_0$ (i.e. (\ref{stochastic Lipschitz differentiability condition}) holds) and $\theta\mapsto\nabla^2_{\theta}G(\theta)$ exists around $\theta_0$ and is Lipschitz continuous at $\theta_0$, then for any compact $K$ we have $\vert R_n(u)-R_n(v)\vert\leq L_n\Vert u-v\Vert$ for $u,v\in K$, for some $L_n=o_p(1)$.
\end{lemma}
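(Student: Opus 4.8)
The plan is to follow Pollard's expansion of the empirical criterion, isolating a deterministic drift governed by $G$ and a stochastic remainder governed by the empirical process $\nu_n$, and then, in the second part, to upgrade the uniform control of the remainder to a Lipschitz control. First I would insert the expansion (\ref{Taylor expansion of loss}) into $G_n$ and subtract the value at $\theta_0$. Writing $\theta=\theta_0+u/\sqrt n$ and using $\tilde r(\cdot,\theta_0)=0$, this gives
\begin{equation*}
    n\bigl(G_n(\theta)-G_n(\theta_0)\bigr)=n^{1/2}(\theta-\theta_0)^T\Bigl(n^{-1/2}\sum_{i=1}^n\triangle(X_i,\theta_0)\Bigr)+\sum_{i=1}^n\tilde r(X_i,\theta).
\end{equation*}
Since $\theta_0$ minimizes the differentiable map $G$, the first-order condition yields $\mathbb{E}[\triangle(X,\theta_0)]=\nabla G(\theta_0)=0$, so that $n^{-1/2}\sum_i\triangle(X_i,\theta_0)=\nu_n\triangle(\cdot,\theta_0)$ and the first term becomes $u^T\nu_n\triangle(\cdot,\theta_0)$. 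For the second term I would pass to the empirical process via $\sum_i\tilde r(X_i,\theta)=n^{1/2}\nu_n\tilde r(\cdot,\theta)+n\,\mathbb{E}[\tilde r(X,\theta)]$, where the same centering identity gives $\mathbb{E}[\tilde r(X,\theta)]=G(\theta)-G(\theta_0)$. This leaves a deterministic drift $n(G(\theta)-G(\theta_0))$ and a stochastic remainder $n^{1/2}\nu_n\tilde r(\cdot,\theta)$.

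I would then treat these two pieces. For the drift, since $\nabla G(\theta_0)=0$ and $G$ is twice differentiable at $\theta_0$ with Hessian $C$, a second-order Taylor expansion gives $n(G(\theta_0+u/\sqrt n)-G(\theta_0))=\tfrac12 u^TCu+\rho_n(u)$, where $\rho_n$ is deterministic and $\sup_{u\in K}|\rho_n(u)|\to 0$ because $\Vert u/\sqrt n\Vert\to 0$ uniformly on the compact $K$. For the stochastic remainder, the key observation is that $\tilde r(\cdot,\theta)=\Vert\theta-\theta_0\Vert r(\cdot,\theta)$ with $\Vert\theta-\theta_0\Vert$ independent of $x$, so $\nu_n\tilde r(\cdot,\theta)=\Vert\theta-\theta_0\Vert\,\nu_n r(\cdot,\theta)$ and hence $n^{1/2}\nu_n\tilde r(\cdot,\theta)=\Vert u\Vert\,\nu_n r(\cdot,\theta)$. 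Setting $R_n(u)=\Vert u\Vert\,\nu_n r(\cdot,\theta_0+u/\sqrt n)+\rho_n(u)$, local stochastic differentiability (\ref{local stochastic differentiability}) gives $\sup_{u\in K}|R_n(u)|\le(\sup_{u\in K}\Vert u\Vert)\sup_{\theta\in\theta_0+K/\sqrt n}|\nu_n r(\cdot,\theta)|+\sup_{u\in K}|\rho_n(u)|\to 0$ in probability, which proves the first claim.

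For the second claim I would bound the increment $R_n(u)-R_n(v)$ by splitting it into stochastic and deterministic parts. For the stochastic part, rewriting $\Vert u\Vert\,\nu_n r(\cdot,\theta_u)=n^{1/2}\nu_n\tilde r(\cdot,\theta_u)$ with $\theta_u=\theta_0+u/\sqrt n$ and applying the stochastic Lipschitz condition (\ref{stochastic Lipschitz differentiability condition}) to the pair $\theta_u,\theta_v$ yields
\begin{equation*}
    \bigl|n^{1/2}\nu_n(\tilde r(\cdot,\theta_u)-\tilde r(\cdot,\theta_v))\bigr|\le n^{1/2}L_n\Vert\theta_u-\theta_v\Vert=L_n\Vert u-v\Vert,
\end{equation*}
with $L_n=o_p(1)$. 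For the deterministic part, differentiating $\rho_n$ and using $\nabla G(\theta_0)=0$ gives $\nabla_u\rho_n(u)=\int_0^1[\nabla^2G(\theta_0+tu/\sqrt n)-C]\,dt\,u$; the assumed Lipschitz continuity of the Hessian at $\theta_0$ bounds the integrand by $O(\Vert u\Vert/\sqrt n)$, so $\rho_n$ is Lipschitz on $K$ (after enclosing $K$ in a ball to ensure convexity) with a constant of order $n^{-1/2}$. Adding the two contributions gives $|R_n(u)-R_n(v)|\le\tilde L_n\Vert u-v\Vert$ with $\tilde L_n=o_p(1)$.

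The main obstacle is the stochastic part of the Lipschitz bound: ordinary (local) stochastic differentiability only controls $\sup_\theta|\nu_n r(\cdot,\theta)|$ and says nothing about increments of the empirical remainder, so it cannot by itself produce equicontinuity of $R_n$. This is exactly why the stronger condition (\ref{stochastic Lipschitz differentiability condition}) is imposed: it directly bounds the normalized increments of $\nu_n\tilde r$, which after the $n^{1/2}$ rescaling is precisely the modulus needed. The only other nonroutine point is verifying the centering identity $\mathbb{E}[\triangle(X,\theta_0)]=0$; beyond that, the deterministic drift is handled by elementary Taylor estimates on compacts.
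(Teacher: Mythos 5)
Your proposal follows essentially the same route as the paper's proof: the same decomposition of $n(G_n(\theta)-G_n(\theta_0))$ into a deterministic drift $n(G(\theta)-G(\theta_0))$ plus an empirical-process fluctuation, the same use of local stochastic differentiability \eqref{local stochastic differentiability} for the uniform bound on $R_n$, and the same rescaling identity $\Vert u\Vert\,\nu_n r(\cdot,\theta_0+u/\sqrt n)=\sqrt n\,\nu_n\tilde r(\cdot,\theta_0+u/\sqrt n)$ to convert \eqref{stochastic Lipschitz differentiability condition} into the Lipschitz bound $L_n\Vert u-v\Vert$ on the stochastic part. Your treatment of the deterministic remainder, via the gradient identity $\nabla_u\rho_n(u)=\int_0^1[\nabla^2G(\theta_0+tu/\sqrt n)-C]\,dt\,u$ and the Lipschitz hypothesis on the Hessian, giving a Lipschitz constant of order $n^{-1/2}$ on a convex enlargement of $K$, is a clean and slightly more quantitative alternative to the paper's appendix argument (Lipschitzness of the Taylor rest term $H$); both use exactly the assumed hypotheses, so this part is fine.

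There is, however, one step you assert that cannot be justified from the lemma's hypotheses: the centering identity $\mathbb{E}[\triangle(X,\theta_0)]=\nabla G(\theta_0)=0$. In this framework $\triangle(\cdot,\theta_0)$ is merely a Borel function appearing in the expansion \eqref{Taylor expansion of loss}; it need not be a pointwise derivative of $\theta\mapsto\ell(x,\theta)$ (the quantile loss is not differentiable in $\theta$ for those $x$ with $y=x^T\theta_0$), no integrability hypothesis in the lemma licenses differentiating under the expectation, and in fact $\mathbb{E}[\triangle(X,\theta_0)]$ is not even determined by the hypotheses: replacing $\triangle(\cdot,\theta_0)$ by $\triangle(\cdot,\theta_0)+c$ and absorbing $-(\theta-\theta_0)^Tc/\Vert\theta-\theta_0\Vert$ into $r$ leaves both stochastic differentiability conditions untouched, because $\nu_n$ annihilates functions that do not depend on $x$. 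The zero-mean condition is assumption $iii)$ of Theorem~\ref{main pattern robust theorem}, deliberately \emph{not} an assumption of this lemma. Fortunately the flaw is inessential: if you carry $\mathbb{E}[\triangle(X,\theta_0)]$ through your two identities, namely $n^{-1/2}\sum_i\triangle(X_i,\theta_0)=\nu_n\triangle(\cdot,\theta_0)+\sqrt n\,\mathbb{E}[\triangle(X,\theta_0)]$ and $\mathbb{E}[\tilde r(X,\theta)]=G(\theta)-G(\theta_0)-(\theta-\theta_0)^T\mathbb{E}[\triangle(X,\theta_0)]$, the two mean terms cancel exactly and you land on the paper's representation with the centered process $\nu_n\triangle(\cdot,\theta_0)$. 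The paper sidesteps the issue from the outset by writing $G_n=G+n^{-1/2}\nu_n\ell$, so that only self-centered quantities ever appear; you should either adopt that splitting or make the cancellation explicit rather than invoking an identity the hypotheses do not supply.
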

\begin{proof} Abbreviate $\theta=\theta_0+u/\sqrt{n}$. Because $G(\theta) = G_n(\theta) + n^{-1/2}\,\nu_n \ell(\cdot,\theta)$, we have 
\begin{equation*}
    n\left(G_n\left(\theta\right)-G_n(\theta_0)\right)= n\left(G\left(\theta\right)-G(\theta_0)\right)+n^{1/2}\nu_n\left[\ell\left(\cdot,\theta\right)-\ell(\cdot,\theta_0)\right].
\end{equation*}
We consider the two terms above separately. Taylor expansion of $G(\theta)$ around $\theta_0$ yields:
 \begin{align*}
     n(G(\theta)-G(\theta_0))&=n(\theta-\theta_0)^T C(\theta-\theta_0)/2+ nH(\theta) \Vert\theta-\theta_0\Vert^2 
     \\
     &=u^TCu/2 +  H(\theta_0+u/\sqrt{n})\Vert u\Vert ^2, 
 \end{align*}
 where $H(\theta)\rightarrow 0$ as $\theta\rightarrow \theta_0$.
 At the same time
 \begin{align*}
     n^{1/2}\nu_n[\ell(\cdot,\theta)-\ell(\cdot,\theta_0)]&=n^{1/2}(\theta-\theta_0)^T \nu_n\triangle(\cdot,\theta_0)+n^{1/2}\Vert(\theta-\theta_0)\Vert \nu_n r(\cdot,\theta)\\
     &=u^T \nu_n\triangle(\cdot,\theta_0)+\Vert u\Vert \nu_n r(\cdot,\theta_0+ u/\sqrt{n}).
 \end{align*}
 Combining both expressions yields the desired representation with a rest term 
 \begin{equation*}
R_n(u)= \Vert u\Vert\cdot \nu_n r(\cdot,\theta_0+ u/\sqrt{n}) + H(\theta_0+u/\sqrt{n})\Vert u\Vert^2.
 \end{equation*}
 Stochastic differentiability guarantees that for any compact set $K$, $sup_{u\in K}\Vert R_n(u)\Vert\rightarrow 0$ in probability. 
 If additionally (\ref{stochastic Lipschitz differentiability condition}) holds, using $\Vert u\Vert\cdot \nu_n r(\cdot,\theta_0+ u/\sqrt{n})=\sqrt{n}\cdot \nu_n \tilde{r}(\cdot,\theta_0+ u/\sqrt{n})$, we obtain
 \begin{align*}
     \left\vert \Vert u\Vert\cdot \nu_n r(\cdot,\theta_0+ u/\sqrt{n})-\Vert v\Vert\cdot \nu_n r(\cdot,\theta_0+ v/\sqrt{n})\right\vert \leq L_n\Vert u-v\Vert,
 \end{align*}
 with $L_n=o_p(1)$. Finally, because $\nabla^2_{\theta}G(\theta)$ exists around $\theta_0$ and is Lipschitz at $\theta_0$, the rest term $H(\theta)$ is Lipschitz continuous around $\theta_0$, see appendix \ref{appendix Lipschitzness of the Taylor rest term}. 
 From this, one can directly verify that for any $u,v\in K$;
 \begin{align*}
     \vert H(\theta_0+u/\sqrt{n})\Vert u\Vert^2-H(\theta_0+v/\sqrt{n})\Vert v\Vert^2\vert \leq L_n' \Vert u-v\Vert,
 \end{align*}
 for some $L_n'=o_p(1)$. As a result
 \begin{equation*}
     \vert R_n(u)-R_n(v)\vert \leq (L_n+L'_n)\Vert u-v\Vert,
 \end{equation*}
 where $(L_n+L'_n)=o_p(1)$. This finishes the proof.
 \end{proof}

\section{Asymptotic distribution of patterns}

\begin{subsection}{Pattern convergence}
\label{sec:Pconv}
We focus on convex penalties of polyhedral type, that is, functions of the form
\begin{equation}\label{penalty form}
    f(\theta) = \max\{v_1^T\theta, \dots, v_k^T\theta\} + g(\theta),
\end{equation}
where $v_1, \dots, v_k \in \mathbb{R}^p$ are penalty-specific vectors, and $g(\theta)$ is a convex and differentiable function. 
This general form includes widely used regularizers such as the Lasso, Fused Lasso, SLOPE, and Elastic Net. 
The behavior of the model patterns induced by such penalties has been analyzed in detail in \cite{BOGDAN_pattern, graczyk2023pattern, hejny2025unveiling}.

Formally, the \emph{pattern} of $f$ at $\theta$ is defined as the set of indices that maximize the inner product $v_i^T\theta$:
\begin{equation*}
    I_f(\theta) := 
    \underset{i \in \{1, \dots, k\}}{\operatorname{argmax}} \{v_i^T\theta\}.
\end{equation*}
The set of all patterns $\mathfrak{P}_f := I_f(\mathbb{R}^p)$ is the image of the pattern map $I_f : \mathbb{R}^p \to \mathcal{P}(\{1, \dots, k\})$\footnote{$\mathcal{P}(\{1, \dots, k\})$ denotes the power set of $\{1, \dots, k\}$.}. 
We also define the \emph{pattern space} of $f$ at $\theta$ as 
\[
    \langle U_{\theta}\rangle_f = \operatorname{span}\{\tilde{\theta} \in \mathbb{R}^p : I_f(\tilde{\theta}) = I_f(\theta)\}.
\]
For notational simplicity, we omit the subscript $f$ when the penalty is clear from context and write 
$I(\theta) = I_f(\theta)$, 
$\langle U_{\theta}\rangle_f = \langle U_{\theta}\rangle$, 
and $\mathfrak{P}_f = \mathfrak{P}$.

As a familiar example, the Lasso penalty $\lambda \Vert \theta\Vert_1 = \lambda \sum_{i=1}^p \vert \theta_1\vert$ can be written in the form (\ref{penalty form}) as the maximizer over $k=2^p$ vectors $v_i$ of the form $(\pm\lambda,\dots,\pm\lambda)$. The pattern $I_f(\theta)$ can be identified with the sign vector $\mathrm{sgn}(\theta)= (\mathrm{sgn}(\theta_1), \dots, \mathrm{sgn}(\theta_p))$, in the sense that $I_f(\theta)=I_f(\tilde{\theta})\iff \mathrm{sgn}(\theta)=\mathrm{sgn}(\tilde{\theta})$. 
For general $f$, one can show that pattern can be equivalently defined by the subdifferential
\begin{equation*}
    I_f(\theta)=I_f(\tilde{\theta})\iff \partial f(\theta)=\partial f(\tilde{\theta}).
\end{equation*}

\begin{definition}
    Let $(\hat{u}_n)_{n\in\mathbb{N}}$ be a sequence of random vectors in $\mathbb{R}^p$. We say that $\hat{u}_n$ converges \textit{in pattern} to a random vector $\hat{u}$ if 
    \begin{equation}\label{pattern convergence}
        \lim_{n\rightarrow \infty}\mathbb{P}[I_f(\hat{u}_n)=\mathfrak{p}]=\mathbb{P}[I_f(\hat{u})=\mathfrak{p}],
    \end{equation}
    for every pattern $\mathfrak{p}\in\mathfrak{P}_f$.
\end{definition}
We note that due to the discontinuous nature of the pattern function $I_f$, pattern convergence does not follow from convergence in distribution. 
\end{subsection}
\begin{subsection}{Main results}
The following theorems are the main results of the paper establishing the asymptotic distribution of the error (Theorem~\ref{main pattern robust theorem in distribution}) and its pattern (Theorem~\ref{main pattern robust theorem}) respectively for $\hat{u}_n=\sqrt{n}(\hat{\theta}_n-\theta_0)$ for \eqref{main objective}. Theorem~\ref{main pattern robust theorem in distribution} is a penalized analog of Theorem~2 \cite{pollard_1985} and generalizes Theorem 2~\cite{fu2000asymptotics}. We note that Theorem~2 in \cite{pollard_1985} assumes consistency of the estimator. In our case, we replace this assumption with uniform tightness (condition $iv)$) and the existence of a uniform envelope (condition $v)$). We formulate the following regularity conditions:
    \begin{align}\label{conditions on the loss}
    i) &\hspace{0,2cm} \ell(x,\theta) \text{ is locally stochastically differentiable at } \theta_0, \text{ (i.e. \eqref{local stochastic differentiability} holds).}\nonumber \\
    ii) &\hspace{0.3cm} G(\theta) \text{ has a positive definite Hessian } C=\nabla^2_{\theta=\theta_0}G(\theta), \text{ at its minimizing value } \theta_0. \nonumber\\
    iii) &\hspace{0.2cm} \text{$\mathbb{E}[\triangle(x,\theta_0)]=0$ and the covariance matrix } C_{\triangle} =\mathbb{E}[\triangle(x,\theta_0)\triangle(x,\theta_0)^T]<\infty. \\
    iv) &\hspace{0.2cm} 
    \hat{\theta}_n \text{ is uniformly tight}.\nonumber\\
    v) &\hspace{0.2cm} \text{For every compact $K\subset\Theta$; $\sup_{\theta\in K}|\ell(x,\theta)|\leq L(x)$ for some $L$ with $\mathbb{E}[|L(X)|]<\infty$.}\nonumber
\end{align}
\begin{theorem}\label{main pattern robust theorem in distribution}
    Let $f$ be a penalty of the form \eqref{penalty form} and assume that $i)-v)$ hold. Then $\hat{u}_n=\sqrt{n}(\hat{\theta}_n-\theta_0)$ converges in distribution to the random vector $\hat{u}:=\textup{argmin}_{u\in\mathbb{R}^p} V(u)$, where
\begin{equation}\label{main objective V(u)}
    V(u) = \dfrac{1}{2}u^{T}Cu-u^{T}W+f'({\theta_0};u),
\end{equation}
and $W\sim\mathcal{N}(0,C_{\triangle})$.
\end{theorem}
In order to guarantee pattern convergence, we impose the stronger SLD at $\theta_0$, and a Lipschitzness of the Hessian of $G(\theta)$ around $\theta_0$:
\begin{align}
    i') &\hspace{0,2cm} \ell(x,\theta) \text{ is stochastically Lipschitz differentiable at } \theta_0, \text{ (i.e. \eqref{stochastic Lipschitz differentiability condition} holds),} \nonumber\\
    ii') &\hspace{0.3cm} \nabla^2_{\theta}G(\theta) \text{ exists around } \theta_0 \text{ and is Lipschitz continuous at } \theta_0, 
\end{align}
\begin{theorem}\label{main pattern robust theorem}
Let $f$ be a penalty of the form \eqref{penalty form} and assume that additionally to $i)-v)$, $i'),ii')$ also hold. Then $\hat{u}_n$ converges in distribution and in pattern to $\hat{u}$ given by \eqref{main objective V(u)} in the sense \eqref{pattern convergence}; 
\begin{equation*}
        \lim_{n\rightarrow \infty}\mathbb{P}[I_f(\hat{u}_n)=\mathfrak{p}]=\mathbb{P}[I_f(\hat{u})=\mathfrak{p}],
    \end{equation*}
    for every pattern $\mathfrak{p}\in\mathfrak{P}_f$.
\end{theorem}
\begin{proof}[Proof of Theorem~\ref{main pattern robust theorem in distribution}]
By definition, $\hat{\theta}_n$ is the minimizer of 
\begin{equation}\label{M_n}
M_n(\theta):=G_n(\theta)+n^{-1/2}f(\theta).
\end{equation}
The rescaled error $\hat{u}_n=\sqrt{n}(\hat{\theta}_n-\theta_0)$ minimizes
\begin{align}\label{V_n(u)}
    V_n(u)&=  n\left(G_n\left(\theta_0+\dfrac{u}{\sqrt{n}}\right)-G_n(\theta_0)\right)+\sqrt{n}\left(f\left(\theta_0+u/\sqrt{n}\right)-f(\theta_0)\right).
\end{align}
To show weak convergence of $\hat{u}_n$ to $\hat{u}$, we shall apply the argmax theorem; see Theorem 3.2.2 in \cite{van1998asymptotic}. For this we need to verify:
\begin{align*}
    i)&\hspace{0,2cm} \hat{\theta}_n \text{ is consistent for }\theta_0, \\
    ii)&\hspace{0,2cm} \hat{u}_n=\sqrt{n}(\hat{\theta}_n-\theta_0) \text{ is uniformly tight},\\
    iii)&\hspace{0,2cm} V_n \text{ converges weakly as a process to } V \text{ in } \ell^{\infty}(K) \text{ for every compact } K\subset\mathbb{R}^p. 
\end{align*}
Here, $\ell^{\infty}(K)$ denotes the metric space of bounded functions from $K$ to $\mathbb{R}$, equipped with the supremum metric $d(f,g):=\sup_{u\in K}\vert f(u)-g(u) \vert$.

\textbf{Step 1:} (consistency of $\hat{\theta}_n$) \\
Consider a compact set $K \subset \Theta$. By $(v)$, measurability of the maps $x\mapsto\ell(x,\theta)$ and continuity of the maps $\theta\mapsto\ell(x,\theta)$, the family $\mathcal{F}_K=\{\ell(x,\theta):\theta\in K\}$ is Glivenko-Cantelli, i.e. $$\sup_{\theta\in K}|G_n(\theta)-G(\theta)|\overset{p}{\longrightarrow} 0,$$
see Lemma~2.4 in \cite{newey1994large} or Example~19.8 in \cite{van1998asymptotic}. Hence $\sup_{\theta\in K}|M_n(\theta)-G(\theta)|\overset{p}{\longrightarrow} 0$. Since $G(\theta)$ is continuous, has a unique minimum at $\theta_0$, and $\hat{\theta}_n$ is uniformly tight by assumption, Corollary 3.2.3 \cite{vanderVaart1996} implies that $\hat{\theta}_n\rightarrow \theta_0$ in probability.
\vspace{0.3cm}

\textbf{Step 2:} (uniform tightness of $\hat{u}_n=\sqrt{n}(\hat{\theta}_n-\theta_0)$) \\
Next, we establish the rate of convergence as $\hat{\theta}_n-\theta_0=O_p(n^{-1/2})$. We verify the conditions of Theorem 3.2.5 \cite{vanderVaart1996}, to obtain uniform tightness of $\hat{u}_n=\sqrt{n}(\hat{\theta}_n-\theta_0)$. We have 
\begin{equation*}
    G(\theta)-G(\theta_0)=(\theta-\theta_0)^TC(\theta-\theta_0)/2\gtrsim\Vert\theta-\theta_0\Vert^2,
\end{equation*}
since $C$ is non-singular \footnote{The notation $\gtrsim$ and $\lesssim$ stands for bounded from below resp. from above up to a universal constant. }. Also, for any $\delta>0$ and $\Vert\theta-\theta_0\Vert\leq\delta$, 
\begin{align*}
    \vert (G_n-G)(\theta)-(G_n-G)(\theta_0)\vert&=n^{-1/2}\vert\nu_n(\ell(\cdot,\theta)-\ell(\cdot,\theta_0))\vert\\
    &\leq n^{-1/2}\Vert\theta-\theta_0\Vert(\Vert\nu_n\triangle(\cdot,\theta_0)\Vert+\vert\nu_n r(\cdot,\theta)\vert)\\
    &\lesssim n^{-1/2} \delta \cdot O_p(1)
\end{align*}
And recall that $f(\theta)=\max\{v_i^T\theta:1\leq i \leq k\}+g(\theta)$ as in (\ref{penalty form}), so that for $\Vert\theta-\theta_0\Vert\leq\delta$,
\begin{align*}
    n^{-1/2}(f(\theta)-f(\theta_0))\leq n^{-1/2}\Vert\theta-\theta_0\Vert(\max_{1\leq i \leq k}\Vert v_i\Vert+\Vert\nabla g(\theta^*)\Vert) \lesssim n^{-1/2} \delta 
\end{align*}
for some $\theta^*$ on the line segment between $\theta$ and $\theta_0$. Hence for sufficiently small $\delta>0$:
\begin{equation*}
\mathbb{E}\left(\sup_{\theta \in B_{\delta}(\theta_0)}\vert(M_n-G)(\theta)-(M_n-G)(\theta_0)\vert\right)\lesssim n^{-1/2}\delta.
\end{equation*}
Now because $\hat{\theta}_n\rightarrow \theta_0$, Theorem 3.2.5 \cite{vanderVaart1996} gives uniform tightness of $\hat{u}_n=\sqrt{n}(\hat{\theta}_n-\theta_0)$, i.e. $\sqrt{n}\Vert\hat{\theta}_n-\theta_0\Vert=O_p(1)$.
\vspace{0.3cm}

\textbf{Step 3:} (Weak convergence of $V_n(u)$ in $\ell^{\infty}(K)$)\\
Rewriting $V_n(u)$ in \eqref{V_n(u)} by Lemma~\ref{Jonas lemma}, we obtain:
\begin{equation*}
    V_n(u)=\dfrac{1}{2}u^TCu+u^T W_n+R_n(u) + \sqrt{n}\left(f\left(\theta_0+u/\sqrt{n}\right)-f(\theta_0)\right),
\end{equation*}
with $W_n:=\nu_n\triangle(\cdot, \theta_0)$, and $\sup_{u\in K}\|R_n(u)\|\overset{p}{\rightarrow} 0$. By the central limit theorem, $W_n$ converges weakly to the random vector $W\sim\mathcal{N}(0,C_{\triangle})$, where $C_{\triangle}=\mathbb{E}[\triangle(x,\theta_0)\triangle(x,\theta_0)^T]$. Now consider the mapping 
\begin{equation*}
    T:w\mapsto \frac12u^TCu + u^Tw. 
\end{equation*}
We see that $T$ is a continuous map between $(\mathbb{R}^p, \|\cdot\|_2)$ and $(\ell^{\infty}(K),\|\cdot\|_K)$, since
\begin{equation*}
    \|T(w)-T(\tilde{w})\|_K=\sup_{u\in K}|u^T(w-\tilde{w})|\leq \sup_{u\in K}(\|u\|_2) \|w-\tilde{w}\|_2.
\end{equation*}
Therefore, by the continuous mapping theorem (Theorem 18.11 \cite{van1998asymptotic}), $T(W_n)$ converges weakly to $T(W)$ in $\ell^{\infty}(K)$.
Finally, $\sup_{u\in K}\|R_n(u)\|\overset{p}{\rightarrow} 0$ and  $\sqrt{n}\left(f\left(\theta_0+u/\sqrt{n}\right)-f(\theta_0)\right)$ converges to the deterministic limit $f'(\theta_0;u)$ in $\ell^{\infty}(K)$. From the Slutsky's theorem (Theorem 18.10 \cite{van1998asymptotic}) we conclude that $V_n(u)$ converges weakly to $V(u)$ in \eqref{main objective V(u)} as a stochastic process in $\ell^{\infty}(K)$. Putting all pieces together, we have established uniform tightness of $\hat{u}_n$ and weak convergence of $V_n$ to $V$ in $\ell^{\infty}(K)$. Moreover, all sample paths of $u\mapsto V(u)$ are continuous and  possess a unique minimum by positive definiteness of $C$. As a result, Theorem 3.2.2 \cite{vanderVaart1996} implies that $\hat{u}_n$ converges weakly to $\hat{u}$ in $\mathbb{R}^p$. 
\vspace{0.3cm}

\end{proof}

We now turn to the proof of Theorem~\ref{main pattern robust theorem}. In this proof we assume knowledge about subdifferentials, for details see \cite{hiriart2013convex, rockafellar2009variational}.
\begin{proof}[Proof of Theorem~\ref{main pattern robust theorem}]
It remains to show that under the stronger regularity assumptions $(i')-(ii')$, the pattern $I(\hat{u}_n)$ converges weakly to $I(\hat{u})$. By Lemma \ref{Jonas lemma}, we can express
\begin{equation*}
    V_n(u)=V(u)+R_n'(u),
\end{equation*}
where
\begin{align*}
    R'_n(u)=R_n(u) + u^T(\nu_n\triangle(\cdot,\theta_0)-W)+\sqrt{n}\left(f\left(\theta_0+u/\sqrt{n}\right)-f(\theta_0)\right)-f'(\theta_0;u),
\end{align*}
so that for every compact set $K$; $\sup_{u\in K}\vert R'_n(u)\vert\rightarrow 0$ and $\vert R'_n(u)-R'_n(v)\vert\leq L'_n \Vert u-v\Vert$ for $u,v\in K$, with $L'_n=o_p(1)$. Taking the subdifferential of (\ref{main objective V(u)}) gives:
\begin{equation*}
    \partial V(u) = Cu-W+\partial_u f'(\theta_0;u).
\end{equation*}

The solution $\hat{u}$ minimizes (\ref{main objective V(u)}) and solves the optimality condition $0\in\partial V(\hat{u})$, or equivalently, by the above \footnote{Here, $\partial f'(\theta_0;\hat{u})$ stands for the subdifferential of the convex function $u\mapsto f'(\theta_0;u)$ at $\hat{u}$.}
    \begin{equation*}
        W\in C\hat{u} + \partial f'(\theta_0;\hat{u}).
    \end{equation*}
    
With high probability, $W$ will fall into the relative $\delta$ interior of $C\hat{u} + \partial f'(\theta_0;\hat{u})$. Precisely, by Lemma~\ref{lemma on subdifferential variational property} with $h(u)=f'(\theta_0;u)$, for any $\varepsilon>0$ there is $\delta>0$ s.t. 
\begin{equation}\label{margin for the W}
    \mathbb{P}[B^{*}_{\delta}(0)\subset C\hat{u} -W + \partial f'(\theta_0;\hat{u})]\geq 1-\varepsilon,
\end{equation}
where $B^{*}_{\delta}(0)=\{u\in \mathbb{R}^p :  \|u\|_2<\delta\}\cap\mathrm{par}(\partial f'(\theta_0;\hat{u}))$ is the relative $\delta-$ball in the parallel space $\mathrm{par}(\partial f'(\theta_0;\hat{u}))$. 
Furthermore, because $L'_n=o_p(1)$, the event 
\begin{align}
    \delta/2-L'_n&>0,\label{high prob events}
\end{align}
occurs with high probability as $n$ increases. Finally, $\hat{u}_n\rightarrow \hat{u}$ in distribution and, by the Skorokhod representation theorem there exist versions $\tilde{u}_n\overset{(d)}{=}\hat{u}_n$ for each $n$ and $\tilde{u}\overset{(d)}{=}\hat{u}$, such that $\tilde{u}_n$ converges to $\tilde{u}$ almost surely. For notational simplicity, we can assume without loss of generality that 
\begin{equation}
    \hat{u}_n\overset{a.s.}{\longrightarrow} \hat{u}.
\end{equation} 
Therefore, for all sufficiently large $n$:
\begin{align}\label{estimation error bound}
    \Vert \hat{u}_n-\hat{u}\Vert&\leq \delta\Vert
    C\Vert_{op}^{-1}/2
\end{align}

\vspace{0.3cm}

We shall now show that the perturbation $R'_n(u)$ in $V_n(u)=V(u)+R'_n(u)$ does not break the pattern of $\hat{u}$. Let $P$ denote the projection onto the pattern space of $\hat{u}$, which is given by $\langle U_{\hat{u}}\rangle=span\{u: I(u)=I(\hat{u})\}$. By convexity of $V(u)$, we have 
\begin{equation*}
    V(\hat{u}_n)\geq V(P\hat{u}_n)+\langle v, \hat{u}_n-P\hat{u}_n \rangle,
\end{equation*}
where $v$ is any subgradient vector in 
$$\partial V(P\hat{u}_n)=CP\hat{u}_n-W+\partial f'(\theta_0;P\hat{u}_n).$$ 
Notice that by definition $P\hat{u}_n$ lies in the pattern space of $\hat{u}$. Also, because $\hat{u}_n$ converges to $\hat{u}$ almost surely, we eventually have $I(P\hat{u}_n)=I(\hat{u})$, and consequently, also \begin{equation}\label{key subdifferential equation in Step 5}
    \partial f'(\theta_0;P\hat{u}_n)=\partial f'(\theta_0;\hat{u}).
\end{equation}
Now assume that the high probability events in (\ref{margin for the W}) and (\ref{high prob events}) occur. Then combining the above observations yields:
\begin{align*}
    V(\hat{u}_n)-V(P\hat{u}_n)&\geq   \sup_{w\in\partial f'(\theta_0;P\hat{u}_n)}\langle CP\hat{u}_n-W+w,\hat{u}_n-P\hat{u}_n \rangle\\
    & = \sup_{w\in\partial f'(\theta_0;\hat{u})}\langle C\hat{u}-W+w,\hat{u}_n-P\hat{u}_n \rangle +\langle C(P\hat{u}_n-\hat{u}),\hat{u}_n-P\hat{u}_n\rangle\\
    & \geq  \sup_{v\in B^*_{\delta}(0)}\langle v,\hat{u}_n-P\hat{u}_n \rangle +\langle C(P\hat{u}_n-\hat{u}), \hat{u}_n-P\hat{u}_n\rangle\\
    & \geq  \delta\Vert \hat{u}_n-P\hat{u}_n\Vert - \Vert P\hat{u}_n-\hat{u}\Vert\cdot\Vert C\Vert_{op}\cdot \Vert \hat{u}_n-P\hat{u}_n\Vert\\
    &\geq \dfrac{\delta}{2}\Vert \hat{u}_n-P\hat{u}_n\Vert,
\end{align*}
where the last inequality holds because $\Vert P\hat{u}_n-\hat{u}\Vert\leq\Vert\hat{u}_n-\hat{u}\Vert\leq \delta\Vert C\Vert^{-1}_{op}/2$, by \eqref{estimation error bound}. Consequently, since $\hat{u}_n$ minimizes $V_n = V+R'_n$;
\begin{align*}
    0\geq V(\hat{u}_n)+R'_n(\hat{u}_n) - (V(P\hat{u}_n) +R'_n(P\hat{u}_n))\geq \left(\delta/2 - L'_n \right) \Vert \hat{u}_n-P\hat{u}_n\Vert.
\end{align*}
For $\delta/2-L'_n>0$, it follows that $\hat{u}_n= P\hat{u}_n$ and hence $I(\hat{u}_n)=I(P\hat{u}_n)=I(\hat{u})$. This proves that $\mathbb{P}[I(\hat{u}_n)= I(\hat{u})]\rightarrow 1$, as $n\rightarrow\infty$.
\end{proof}

We now formulate a weaker result, which follows directly from Theorem~\ref{main pattern robust theorem}, in terms of more standard regularity assumptions.  
\begin{corollary}\label{corollary of the main pattern theorem}
    Let $f$ be a penalty of the form \eqref{penalty form} and denote  by $\theta_0$ the minimizing value of $G(\theta)=\mathbb{E}[\ell(X,\theta)]$. Assume there is an open neighborhood $U$ around $\theta_0$, on which $\theta\mapsto\ell(x,\theta)$ is $C^2$  for every x, and:
    \begin{align*}
    i) &\hspace{0,2cm}  \|\ell''(x,\theta)\Vert\leq M(x) \text{ for } \theta\in U, \text{ for }M(x) \text{ with } \mathbb{E}[M(X)^2]<\infty. \\
    ii) &\hspace{0.3cm} G(\theta)\text{ is }C^3 \text{ on } U \text{ and has a positive definite Hessian } C=\nabla^2_{\theta=\theta_0} G(\theta) \text{ at } \theta_0. \\
    iii) &\hspace{0.2cm} \text{ $\mathbb{E}[\ell'(x,\theta_0)]=0$ and the covariance matrix } C_{\triangle} =\mathbb{E}[\ell'(x, \theta_0)\ell'(x, \theta_0)^T]<\infty. \\
    iv) &\hspace{0.2cm} \theta_0 \text{ is an interior point of } \Theta \text{ and }\hat{\theta}_n \text{ is uniformly tight}.\\
    v) &\hspace{0.2cm} \text{For every compact $K\subset\Theta$; $\sup$$_{\theta\in K}|\ell(x,\theta)|\leq L(x)$ for some $L$ with $\mathbb{E}[|L(X)|]<\infty$.}
\end{align*}
     Then $\hat{u}_n=\sqrt{n}(\hat{\theta}_n-\theta_0)$ converges in distribution to the minimizer $\hat{u}$ of \eqref{main objective V(u)}. Moreover, $\hat{u}_n$ converges to $\hat{u}$ in pattern in the sense \eqref{pattern convergence}.
\end{corollary}
\begin{proof}
    The proof follows directly from Theorem~\ref{main pattern robust theorem}. By remark \ref{remark sufficient Lipschitz condition}, condition $i)$ (corresponding to \eqref{regularity conditions}) implies Lipschitz stochastic differentiability, and thus also stochastic differentiability at $\theta_0$. Condition $ii)$ is strengthened to include $ii')$, hence all assumptions in Theorem~\ref{main pattern robust theorem} are satisfied, and the statement follows.
\end{proof}
\end{subsection}

\section{Pattern recovery}
By Theorem~\ref{main pattern robust theorem}, the limiting distribution of $\hat{u}_n=\sqrt{n}(\hat{\theta}_n-\theta_0)$ and its pattern $I(\hat{u}_n)$ is fully determined by the limiting random vector $\hat{u}$ given by \eqref{main objective V(u)}. This double convergence toward a limit of the form \eqref{main objective V(u)} is exactly the Assumption~A in Section 3 \cite{hejny2025unveiling}. That assumption has several important and direct implications for the limiting behaviour of $I(\hat{\theta}_n)$. For completeness, we restate these implications explicitly:
\vspace{0.3cm}

\textbf{Implication 1:} (Corollary 3.4 \cite{hejny2025unveiling}) \\
The limiting distribution of the pattern $I(\hat{\theta}_n)$ is completely determined by

\begin{equation*}
    \mathbb{P}[I(\hat{\theta}_n) =\mathfrak{p} ]\underset{n\to\infty}{\longrightarrow} \mathbb{P}\left[I_{\theta_0}(\hat{u}) =\mathfrak{p}\right],
\end{equation*}
where
\begin{equation*}
    I_{\theta_0}(\hat{u}):=\lim_{\varepsilon\downarrow 0}I(\theta_0+\varepsilon \hat{u})
\end{equation*}
and $\hat{u}$ is the minimizer of (\ref{main objective V(u)}).
\vspace{0.3cm}

\textbf{Implication 2:} (Theorem 3.5 \cite{hejny2025unveiling})\\
Moreover, the probability of recovering the true pattern is given by
\begin{equation*}
    \mathbb{P}[I(\hat{\theta}_n)=I(\theta_0)]\longrightarrow \mathbb{P}[\mathcal{N}(\mu,\Sigma)\in\partial f(\theta_0)],
\end{equation*}
where 
\begin{align*}
    \mu &=C^{1/2} P C^{-1/2}v_0\\
    \Sigma &= C^{1/2}(I-P)C^{1/2}C_{\triangle}C^{1/2}(I-P)C^{1/2},
\end{align*}
with $v_0$ any vector in $\partial f(\theta_0)$, and $P$ the projection on $C^{1/2}\langle U_{\theta_0}\rangle$. Here  $\langle U_{\theta_0}\rangle=span\{\theta: I(\theta)=I(\theta_0)\}$ denotes the pattern space at $\theta_0$, and $C$ and $C_{\triangle}$  are as in Theorem~\ref{main pattern robust theorem}.
\vspace{0.3cm}

\textbf{Implication 3:} (Corollary 3.7 \cite{hejny2025unveiling}) \\
If the asymptotic irrepresentability condition 
$\mu \in ri(\partial f(\theta_0))$ holds, or equivalently $C\langle U_{\theta_0}\rangle\cap ri(\partial f(\theta_0))\neq \emptyset$, then adding an $\alpha$ scaling to the penalty $n^{-1/2}\alpha f(\theta)$ in (\ref{main objective}), for all $\alpha\geq 0$;
\begin{align*}
\lim_{n\rightarrow\infty}\mathbb{P}\big[I(\hat{\theta}_n)=I(\theta_0)\big]\geq 1- 2e^{-c\alpha^2},
\end{align*}
for some constant $c>0$.
\vspace{0.3cm}

\textbf{Implication 4:} (Theorem 3.10 \cite{hejny2025unveiling}) \\

If $\hat{\theta}_n^{(1)}$ is any initial sequence of estimators such that $\sqrt{n}(\hat{\theta}_n^{(1)}-\theta_0)\overset{d}{\longrightarrow} W$ for some random vector $W$ with sub-gaussian entries. Then for 
\begin{equation*}
    \hat{\theta}_n^{(2)}=\text{Prox}_{n^{-1/2}\alpha f}(\hat{\theta}_n^{(1)})=\underset{\theta\in\mathbb{R}^p}{\operatorname{argmin}}\hspace{0.2cm}\frac{1}{2}\Vert \hat{\theta}_n^{(1)}-\theta\Vert_2^2+n^{-1/2}\alpha f(\theta).
\end{equation*}
for all $\alpha\geq0$;
\begin{align*}
\lim_{n\rightarrow\infty}\mathbb{P}\big[I(\hat{\theta}_n^{(2)})=I(\theta_0)\big]\geq 1- 2e^{-c\alpha^2},
\end{align*}
for some constant $c>0$, provided that $\langle U_{\theta_0}\rangle\cap ri(\partial f(\theta_0))\neq \emptyset$. In particular, this is true for any $\theta_0$ and for any penalty $f$ given by a symmetric norm such as Lasso, SLOPE, or Elastic Net. 

\subsection{Vague clustering}
Pattern convergence as defined in (\ref{pattern convergence}) is sensitive to infinitesimal perturbations. A more refined way of measuring how well $\hat{\theta}_n$ is recovering the pattern of $\theta_0$ is to consider the error $\hat{u}_n=\sqrt{n}(\hat{\theta}_n-\theta_0)$ and decompose it into its pattern error and residual error component by projecting onto the pattern space $\langle U_{\theta_0}\rangle$. Let $P_{\theta_0}$ be the projection onto the pattern space of $\theta_0$, then we can decompose the error as:
\begin{align*}
    \Vert \hat{u}_n\Vert_2^2=\underbrace{\Vert P_{\theta_0}\hat{u}_n\Vert_2^2}_{\text{pattern error}}+\underbrace{\Vert (I-P_{\theta_0})\hat{u}_n\Vert_2^2}_{\text{residual error}}.
\end{align*}
\begin{example}[Lasso / support pattern]
Let \(\theta_0=(1,-2,0,0)^\top\) and consider the Lasso, whose pattern is the active set \(S=\{i:\theta_{0,i}\neq 0\}=\{1,2\}\). Here the pattern subspace is
\(
T_{\theta_0}=\{v\in\mathbb{R}^4:\ v_{S^c}=0\}.
\)
The orthogonal projector is \(P_{\theta_0}=\mathrm{diag}(1,1,0,0)\). Hence, for any \(u=(u_1,u_2,u_3,u_4)^\top\),
\[
P_{\theta_0}u=(u_1,u_2,0,0)^\top,
\qquad
(I-P_{\theta_0})u=(0,0,u_3,u_4)^\top.
\]
In particular, the pattern error is \(\|P_{\theta_0}\hat u_n\|_2^2=\hat u_{n,1}^2+\hat u_{n,2}^2\) and the residual error equals \(\|(I-P_{\theta_0})\hat u_n\|_2^2=\hat u_{n,3}^2+\hat u_{n,4}^2\).
\end{example}

The residual error $RE(\hat{\theta}_n):=\Vert (I-P_{\theta_0})\hat{u}_n\Vert_2^2$ is a measure for how well $\hat{\theta}_n$ recovers the pattern of $\theta_0$, or more precisely, its pattern space $\langle U_{\theta_0}\rangle$. For sufficiently large $n$, exact pattern recovery $I(\hat{\theta}_n)=I(\theta_0)$ is equivalent to the residual error vanishing $\Vert (I-P_{\theta_0})\hat{u}_n\Vert_2^2=0$. Alternatively, we can consider the relative residual error
\begin{align}\label{eq:RRE}
    RRE(\hat{\theta}_n) = \frac{\Vert (I-P_{\theta_0})\hat{u}_n\Vert_2^2}{\Vert \hat{u}_n\Vert_2^2}= 1-\frac{\Vert P_{\theta_0}\hat{u}_n\Vert_2^2}{\Vert \hat{u}_n\Vert_2^2},
\end{align}
which measures the proportion of the error due to misspecifying the pattern compared to the total estimation error. Note that $REE(\hat{\theta}_n)$ is between zero and one. It is small when the estimator is relatively close to the pattern space of $\theta_0$, and exactly $0$, whenever $I(\hat{\theta}_n)=I(\theta_0)$. Finally, we remark that $RE(\hat{\theta}_n)$ and $REE(\hat{\theta}_n)$ can be viewed as the residual sum of squares and $1-R^2$ in the linear model $\hat{u}_n=U_M \beta + \varepsilon$, where the design matrix $U_M$ is a matrix whose columns form a basis of the pattern space $\langle U_{\theta_0}\rangle$.
\section{Examples}
Assume a regression model $y=f_{\theta}(x)+\varepsilon$, where $\{f_{\theta}:\theta\in\Theta\}$ is some parametrized family of regression functions and $\varepsilon$ some random noise. We estimate $\theta$ from observed i.i.d. pairs of data $(y_i, x_i)\in\mathbb{R}\times\mathbb{R}^p$, by solving a penalized optimization problem as in (\ref{main objective}):
\begin{equation}\label{main objective for regression}
     \hat{\theta}_n = \underset{\theta\in\Theta}{\operatorname{argmin}}  \dfrac{1}{n}\sum_{i=1}^{n} \ell(y_i, x_i, \theta) + n^{-1/2}f(\theta),
\end{equation}
where $\{\ell(y,x,\theta):\theta\in\Theta\}$ is a suitable family of loss functions. We note that the data $(y_i,x_i)$ now play the role of the observations $X_i$.
\subsection{Exponential family regression models}
Suppose $y_1,y_2,\dots,y_n$ are independent identically distributed response variables coming from an exponential density
\begin{equation}
  y_i\sim g_{\eta_i}(y_i)=e^{\tau(y_i\eta_i - \psi(\eta_i))}g_0(y_i),
\end{equation}
where $\eta_i=x_i^T \theta\in\mathbb{R}$, with $\theta\in \Theta=\mathbb{R}^p$; see \cite{Efron_2022}. If $\theta_0$ is the true parameter, the regression function is given by $\mathbb{E}[y\vert x]=\dot{\psi}(\eta)=\dot{\psi}(x^T\theta_0)$ and the model can be written as
\begin{equation*}
    y_i=\dot{\psi}(x_i^T\theta_0)+\varepsilon_i,
\end{equation*}
with noise $\varepsilon_i$ which might depend on $x_i$, but $\mathbb{E}[\varepsilon_i\vert x_i]=0$. The negative log likelihood is:
\begin{align*}
    -log (g_{\boldsymbol{\eta}}(y))&= -log \prod_{i=1}^n g_{\eta_i}(y_i)=\tau\sum_{i=1}^n \underbrace{\psi(\eta_i)-y_i \eta_i}_{\ell(y_i,\eta_i)} + \sum_{i=1}^n log(g_0(y_i)).
\end{align*}
Substituting $\eta_i = x_i^T\theta$ we obtain the loss function $\ell(y, x,\theta) = \psi (x^T\theta)-y x^T \theta$, which can be used to obtain an $M-$ estimate of $\theta_0$, given by (\ref{main objective}). Without penalization, this corresponds to the MLE estimator minimizing the negative log-likelihood. Note that the observed data in this context are the paired i.i.d observations $(y_i, x_i)$.  We now verify that under mild conditions on $\psi$, Theorem \ref{main pattern robust theorem} applies for generalized linear models. 
\begin{corollary}
Let $\psi$ be a strictly convex $C^2$ function. Assume the random vector $x\in\mathbb{R}^p$ has positive definite covariance $\mathbb{E}[xx^T]$, and that $G(\theta)$ is $C^3$ on some open neighborhood $U$ around $\theta_0$, such that $\mathbb{E}[\Vert xx^Tsup_{\theta\in U}\ddot{\psi}(x^T\theta)\Vert^2]<\infty$. Finally, suppose $\mathbb{E}[\sup_{\theta\in K}\psi(x^T\theta)]<\infty$ for every compact set $K$. If $\hat{\theta}_n$ is uniformly tight, then $\sqrt{n}(\hat{\theta}_n-\theta_0)$ converges in distribution and in pattern to $\hat{u}$ given by \eqref{main objective V(u)}, 
 with 
$C= \mathbb{E}[xx^T\ddot{\psi}(x^T\theta_0)]$, $C_{\triangle}=\tau^{-1}C$.
    
\end{corollary}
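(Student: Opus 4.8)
The plan is to verify the five numbered hypotheses of the preceding Corollary (the standard-regularity version of Theorem~\ref{main pattern robust theorem}) for the specific loss $\ell(y,x,\theta)=\psi(x^T\theta)-y\,x^T\theta$, and then invoke that Corollary to obtain both distributional and pattern convergence at once. First I would record the derivatives
\begin{equation*}
\ell'(y,x,\theta)=\bigl(\dot\psi(x^T\theta)-y\bigr)x,\qquad \ell''(y,x,\theta)=\ddot\psi(x^T\theta)\,xx^T,
\end{equation*}
which are continuous in $\theta$ since $\psi\in C^2$, so that $\theta\mapsto\ell(y,x,\theta)$ is $C^2$ on all of $\mathbb{R}^p$. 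The Hessian-envelope condition $i)$ is then immediate: convexity gives $\ddot\psi\geq0$, so $\Vert\ell''(y,x,\theta)\Vert=\ddot\psi(x^T\theta)\Vert xx^T\Vert\leq M(x):=\sup_{\theta\in U}\ddot\psi(x^T\theta)\Vert xx^T\Vert$, and the standing assumption $\mathbb{E}[\Vert xx^T\sup_{\theta\in U}\ddot\psi(x^T\theta)\Vert^2]<\infty$ is exactly $\mathbb{E}[M(X)^2]<\infty$.

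Next I would identify the two matrices and check definiteness. The Hessian of $G$ at $\theta_0$ is $C=\mathbb{E}[\ddot\psi(x^T\theta_0)xx^T]$, and for $v\neq0$ we have $v^TCv=\mathbb{E}[\ddot\psi(x^T\theta_0)(v^Tx)^2]$; since strict convexity of the cumulant function forces $\ddot\psi>0$, this can vanish only if $v^Tx=0$ almost surely, which would contradict positive definiteness of $\mathbb{E}[xx^T]$. Hence $C\succ0$, which together with the assumed $C^3$ regularity of $G$ gives condition $ii)$ (and a fortiori the Lipschitz Hessian $ii')$ needed for pattern convergence). For condition $iii)$, the model identity $\mathbb{E}[y\mid x]=\dot\psi(x^T\theta_0)$ (equivalently, stationarity of $\theta_0$) yields $\mathbb{E}[\ell'(y,x,\theta_0)]=\mathbb{E}[(\dot\psi(x^T\theta_0)-y)x]=0$, and
\begin{equation*}
C_{\triangle}=\mathbb{E}\bigl[(y-\dot\psi(x^T\theta_0))^2\,xx^T\bigr]=\mathbb{E}\bigl[\mathrm{Var}(y\mid x)\,xx^T\bigr]=\tau^{-1}\mathbb{E}[\ddot\psi(x^T\theta_0)xx^T]=\tau^{-1}C,
\end{equation*}
where the third equality is the exponential-family variance identity $\mathrm{Var}(y\mid x)=\tau^{-1}\ddot\psi(x^T\theta_0)$; finiteness follows from the same $L^2$ moment bound as above. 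This matches the claimed value of $C_{\triangle}$.

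It remains to check $iv)$ and $v)$. Since $\Theta=\mathbb{R}^p$, $\theta_0$ is automatically interior, and uniform tightness of $\hat\theta_n$ is assumed, so $iv)$ holds. For the integrable envelope $v)$ I would split $\sup_{\theta\in K}|\ell(y,x,\theta)|\leq\sup_{\theta\in K}|\psi(x^T\theta)|+\bigl(\sup_{\theta\in K}\Vert\theta\Vert\bigr)|y|\,\Vert x\Vert$. The first term is bounded above by the directly assumed $\mathbb{E}[\sup_{\theta\in K}\psi(x^T\theta)]<\infty$ and below by the supporting-line inequality $\psi(s)\geq\psi(0)+\dot\psi(0)s$, contributing only a term linear in $\Vert x\Vert$, hence integrable once $\mathbb{E}[\Vert x\Vert]<\infty$. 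With all five hypotheses verified, the Corollary applies and delivers convergence of $\sqrt n(\hat\theta_n-\theta_0)$ both in distribution and in pattern to the minimizer $\hat u$ of \eqref{main objective V(u)} with the stated $C$ and $C_{\triangle}$.

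I expect the main obstacle to be the envelope condition $v)$, specifically the bilinear term $\mathbb{E}[|y|\,\Vert x\Vert]$: it is the only hypothesis not reducible to a direct derivative computation, and making it rigorous requires moment bookkeeping for the pair $(y,x)$ — controlling $\mathbb{E}[|\varepsilon|\,\Vert x\Vert]$ through $\mathrm{Var}(y\mid x)=\tau^{-1}\ddot\psi(x^T\theta_0)$ and Cauchy--Schwarz, and $\mathbb{E}[|\dot\psi(x^T\theta_0)|\,\Vert x\Vert]$ through a moment assumption on $x$. All remaining conditions follow transparently from the explicit formulas for $\ell'$ and $\ell''$ and the exponential-family variance identity.
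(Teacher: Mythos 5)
Your proof is correct and takes essentially the same approach as the paper's: the same derivative computations $\ell'=(\dot\psi(x^T\theta)-y)x$ and $\ell''=\ddot\psi(x^T\theta)xx^T$, the same strict-convexity argument for $C\succ 0$, the same exponential-family variance identity giving $C_{\triangle}=\tau^{-1}C$, and the same envelope splitting for condition $v)$; routing through the standard-regularity Corollary instead of invoking Theorem~\ref{main pattern robust theorem} together with Remark~\ref{remark sufficient Lipschitz condition} directly is an immaterial difference, since that Corollary is proved exactly that way. Your closing concern about the term $\mathbb{E}[\vert y\vert\,\Vert x\Vert]$ is reasonable but applies equally to the paper, which simply asserts that this envelope ``has finite expectation by assumption.''
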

 \begin{proof}
Differentiating twice we compute:
\begin{align*}
    \ell(y, x,\theta) &= \psi (x^T\theta)-y x^T \theta\\
    \nabla_{\theta}\ell(y,x,\theta)&=x(\dot{\psi}(x^T\theta)-y)\\
    \nabla^2_{\theta}\ell(y,x,\theta)&=xx^T\ddot{\psi}(x^T\theta)
\end{align*}
First, observe that $\theta_0$  is the unique minimizing value of $G(\theta)=\mathbb{E}[\ell(y,x,\theta)]$. This is because $\nabla_{\theta}\mathbb{E}[\ell(y,x,\theta)] = \mathbb{E}[x(\dot{\psi}(x^T\theta)-y)]=\mathbb{E}[x\mathbb{E}[(\dot{\psi}(x^T\theta)-y)\vert x]]=0$ at $\theta_0$. Therefore $\theta_0$ is a local minimum of $G(\theta)$, and since $\psi$ is strictly convex so is $G(\theta)$, hence $\theta_0$ is the unique global minimum. Moreover, by strict convexity and positive definiteness of $\mathbb{E}[xx^T]$, the Hessian $C=\nabla^2_{\theta}G(\theta)=\mathbb{E}[xx^T\ddot{\psi}(x^T\theta)]$ is positive definite. The covariance equals 
\begin{align*}
    C_{\triangle}&=Cov(x(\dot{\psi}(x^T\theta_0)-y))\\
    &=\mathbb{E}[xx^T(\dot{\psi}(x^T\theta_0)-y)^2]\\
    &=\mathbb{E}[xx^T\mathbb{E}[(\dot{\psi}(x^T\theta_0)-y)^2\vert x]]\\
    &=\tau^{-1}\mathbb{E}[xx^T\ddot{\psi}(x^T\theta_0)]=\tau^{-1}C
\end{align*}
where we have used the fact that $\mathbb{E}[(\dot{\psi}(x^T\theta_0)-y)^2\vert x]=Var(y\vert x)=\tau^{-1}\ddot{\psi}(x^T\theta_0)$ for exponential family distributions. Further, the assumption $\mathbb{E}[\Vert xx^Tsup_{\theta\in U}\ddot{\psi}(x^T\theta)\Vert^2]<\infty$ implies 
by Remark~\ref{remark sufficient Lipschitz condition} the SLD condition \eqref{stochastic Lipschitz differentiability condition}. 
Lastly, we obtain uniform bound over compact sets; $\sup_{\theta\in K}|\ell(y,x,\theta)|\leq \sup_{\theta\in K}|\psi(x^T\theta)|+|y|\|x\|\sup_{\theta\in K}\|\theta\|$, which has finite expectation by assumption. Hence all conditions in Theorem~\ref{main pattern robust theorem} are satisfied and the result follows.
 
\end{proof}

\begin{example}
The above theory applies to Linear, Logistic, or Poisson regression:    

\begin{table}[h!]
\centering
\begin{tabular}{|c|c|c|c|c|}
\hline
\textbf{Regression} & $\psi(\eta)$ & $\ell(y,x,\theta)$ & $C$ & $C_{\triangle}$ \\
\hline
Linear $y\sim\mathcal{N}(\eta, \sigma^2)$   & $\eta^2/2$ & $x^T\theta^2/2-yx^T\theta$ & $\mathbb{E}[xx^T]$ & $\sigma^2C$ \\
\hline
Logistic $y\sim Ber(e^\eta/(1+e^\eta))$ & $log(1+e^{\eta})$ & $log(1+e^{x^T\theta})- yx^T\theta$ & $\mathbb{E}[xx^Te^{x^T\theta_0}/(1+e^{x^T\theta_0})^2]$ & $C$ \\
\hline
Poisson $y\sim Pois(e^\eta))$   & $e^\eta$ & $e^{x^T\theta} - yx^T\theta$ & $\mathbb{E}[xx^Te^{x^T\theta_0}]$ & $C$ \\
\hline
\end{tabular}
\caption{Linear, Logistic and Poisson regression, where $\eta=x^T\theta$.}
\end{table}
\end{example}

\subsection{Robust regression}
We assume a linear relationship $y = x^T\theta_0 + \varepsilon$, with $\varepsilon$ independent of $x$, but now drop the assumption on the existence of $\mathbb{E}[\varepsilon]$, necessary to model the conditional mean $\mathbb{E}[y|x]$. We consider robust regression such as the Huber regression and Quantile regression. In both cases, the goal is to verify that conditions of Theorem~\ref{main pattern robust theorem} are satisfied, and therefore the estimator $\hat{\theta}_n$ given by (\ref{main objective for regression}) has an error $\sqrt{n}(\hat{\theta}_n-\theta_0)$ converging in distribution and in pattern to an asymptotic limit (\ref{main objective V(u)}), specified by $C$ and $C_{\triangle}$.

\subsubsection{Huber regression}

Assume linear model $y = x^T \theta + \varepsilon$, where $\varepsilon$ is a centered random variable with bounded density $f(\varepsilon)$, which is symmetric around $0$. We assume $\varepsilon$ is independent of $x$. We do not make any assumptions on the existence of $\mathbb{E}[\varepsilon]$. As a result, the conditional expectation $\mathbb{E}[y|x]$ might not exist, but the conditional truncated mean $\mathbb{E}[y\mathbf{1}_{\vert y-x^T\theta_0\vert\leq k}/\mathbb{P}[\vert y-x^T\theta_0\vert\leq k]\vert x]=x^T\theta_0$ does. We assume that $\mathbb{E}[xx^T]$ is positive definite, and that $\mathbb{E}[\Vert x\Vert^4]<\infty$.

Consider $ \ell(y, x, \theta)= H(y-x^T\theta)= H(\varepsilon - t(\theta))$, with $t(\theta) := x^T(\theta-\theta_0)$ and
the Huber loss $H:\mathbb{R}\rightarrow\mathbb{R}$, given by
\begin{align*}
    H(x)&:=k \left(-x-\dfrac{k}{2}\right)\mathbf{1}_{\{x<-k\}}+\dfrac{1}{2}x^2\mathbf{1}_{\{\vert x \vert<k\}}+k\left(x-\dfrac{k}{2}\right)\mathbf{1}_{\{x>k\}},
\end{align*}
for some constant $k>0$.
\begin{proposition}
    Under the above assumptions, $\hat{u}_n=\sqrt{n}(\hat{\theta}_n-\theta_0)$ converges in distribution and in pattern to $\hat{u}$, which minimizes \eqref{main objective V(u)}, with $C=\delta_k\mathbb{E}[xx^T], C_{\triangle}=\gamma_k\mathbb{E}[xx^T]$,
    \begin{align*}
        \delta_k&=\mathbb{E}[\varepsilon^2\mathbf{1}_{\vert\varepsilon\vert<k}]+k^2\mathbb{P}[\vert\varepsilon\vert>k],\\
        \gamma_k&=\mathbb{P}[\vert\varepsilon\vert<k].
    \end{align*}
\end{proposition}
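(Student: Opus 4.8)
The plan is to verify that the Huber loss $\ell(y,x,\theta)=H(y-x^T\theta)=H(\varepsilon-x^T(\theta-\theta_0))$ meets every hypothesis of Theorem~\ref{main pattern robust theorem}: conditions i)--v) for the distributional limit and i')--ii') for pattern convergence. The matrices $C,C_\triangle$ then drop out of the first- and second-order computations. Since $H\in C^1$ with $H'(t)=\max\{-k,\min\{t,k\}\}$ (bounded by $k$ and globally $1$-Lipschitz) and a.e.\ second derivative $H''(t)=\mathbf{1}_{\{|t|<k\}}$, differentiating once in $\theta$ and evaluating at the residual $y-x^T\theta_0=\varepsilon$ gives the score
\[
\triangle(y,x,\theta_0)=\nabla_\theta\ell\,\big|_{\theta_0}=-x\,H'(\varepsilon).
\]

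I would first settle condition iii) and the form of the limiting matrices. Because $H'$ is odd, $\varepsilon$ is symmetric about $0$, and $\varepsilon$ is independent of $x$, we get $\mathbb{E}[\triangle]=-\mathbb{E}[x]\,\mathbb{E}[H'(\varepsilon)]=0$; by the same independence, $C_\triangle=\mathbb{E}[\triangle\triangle^T]=\mathbb{E}[H'(\varepsilon)^2]\,\mathbb{E}[xx^T]$, where $\mathbb{E}[H'(\varepsilon)^2]=\mathbb{E}[\varepsilon^2\mathbf{1}_{\{|\varepsilon|<k\}}]+k^2\mathbb{P}[|\varepsilon|>k]$ is finite since $H'$ is bounded and $\mathbb{E}[\Vert x\Vert^2]<\infty$. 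For the Hessian I would integrate the kink of $H''$ against the law of $\varepsilon$: writing $s:=x^T(\theta-\theta_0)$ and letting $F$ be the c.d.f.\ of $\varepsilon$,
\[
\nabla^2_\theta G(\theta)=\mathbb{E}\!\left[\,xx^T\big(F(s+k)-F(s-k)\big)\,\right],
\]
so that at $\theta_0$ (where $s=0$) we obtain $C=\mathbb{P}[|\varepsilon|<k]\,\mathbb{E}[xx^T]$. These two scalar multiples of $\mathbb{E}[xx^T]$ are precisely the factors $\gamma_k=\mathbb{P}[|\varepsilon|<k]$ and $\delta_k=\mathbb{E}[\varepsilon^2\mathbf{1}_{\{|\varepsilon|<k\}}]+k^2\mathbb{P}[|\varepsilon|>k]$ of the statement, and $C$ is positive definite (condition ii)) because its scalar factor $\mathbb{P}[|\varepsilon|<k]$ is positive and $\mathbb{E}[xx^T]\succ0$. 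Condition iv) holds with $\Theta=\mathbb{R}^p$, so $\theta_0$ is interior, while uniform tightness of $\hat\theta_n$ either is taken as a hypothesis or follows from convexity and coercivity of $\theta\mapsto G(\theta)-G(\theta_0)$.

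Two conditions demand genuine care. Condition v) is delicate because no moment of $\varepsilon$ is assumed, so the raw quantity $\sup_{\theta\in K}|H(y-x^T\theta)|$ may fail to be integrable. I would remedy this by recentering at $\theta_0$, replacing $\ell$ by $H(y-x^T\theta)-H(y-x^T\theta_0)$: this alters neither $\hat\theta_n$, $\theta_0$, $\triangle$, $C$, nor $C_\triangle$, and by the $k$-Lipschitzness of $H$ it obeys $\sup_{\theta\in K}|\cdot|\le k\Vert x\Vert\sup_{\theta\in K}\Vert\theta-\theta_0\Vert$, which is integrable since $\mathbb{E}[\Vert x\Vert]<\infty$; this restores v) and hence the Glivenko--Cantelli and consistency steps of Theorem~\ref{main pattern robust theorem}. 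Condition i') I would obtain from Remark~\ref{remark sufficient Lipschitz condition}: global $1$-Lipschitzness of $H'$ yields $\Vert\ell'(y,x,\theta_1)-\ell'(y,x,\theta_2)\Vert\le\Vert x\Vert^2\Vert\theta_1-\theta_2\Vert$ with $\mathbb{E}[\Vert x\Vert^4]<\infty$, so \eqref{stochastic Lipschitz differentiability condition} holds, and a fortiori \eqref{local stochastic differentiability} (condition i)).

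The main obstacle is condition ii'), the existence and Lipschitz continuity of $\nabla^2_\theta G$ near $\theta_0$, since $H$ is \emph{not} twice differentiable and the smoothness of $G$ cannot be read off pointwise from that of $\ell$. The resolution is exactly the integrated-kink representation above: the map $s\mapsto F(s+k)-F(s-k)$ is differentiable, with derivative bounded in absolute value by $2b$, where $b$ is the (finite) supremum of the density of $\varepsilon$. Hence $\nabla^2_\theta G$ exists and is Lipschitz with constant $\lesssim b\,\mathbb{E}[\Vert x\Vert^3]<\infty$, using $\mathbb{E}[\Vert x\Vert^4]<\infty$; this is precisely the step where the bounded-density assumption is indispensable. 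With all of i)--v) and i')--ii') in hand, Theorem~\ref{main pattern robust theorem} delivers the distributional and pattern convergence of $\hat u_n=\sqrt{n}(\hat\theta_n-\theta_0)$ to the minimizer $\hat u$ of \eqref{main objective V(u)}.
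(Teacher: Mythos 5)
Your proof is correct and follows essentially the same route as the paper's: verify the hypotheses of Theorem~\ref{main pattern robust theorem}, compute the score $\triangle(y,x,\theta_0)=-x H'(\varepsilon)$ and its covariance using the symmetry of $\varepsilon$ and its independence from $x$, obtain stochastic Lipschitz differentiability from Remark~\ref{remark sufficient Lipschitz condition} via $\Vert \ell'(\cdot,\theta_1)-\ell'(\cdot,\theta_2)\Vert\leq\Vert x\Vert^2\Vert\theta_1-\theta_2\Vert$ together with $\mathbb{E}[\Vert x\Vert^4]<\infty$, and prove Lipschitz continuity of $\nabla^2_{\theta}G$ near $\theta_0$ by integrating the kink against the bounded density of $\varepsilon$, with a constant controlled by $\mathbb{E}[\Vert x\Vert^3]$ --- all of which matches the paper's argument. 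Two remarks are worth making. First, your identification $C=\gamma_k\mathbb{E}[xx^T]$, $C_{\triangle}=\delta_k\mathbb{E}[xx^T]$ is the mathematically correct one: it agrees with the paper's own proof body and with the $k\rightarrow\infty$ sanity check ($C\rightarrow\mathbb{E}[xx^T]$, $C_{\triangle}\rightarrow\sigma^2\mathbb{E}[xx^T]$), but the proposition as printed swaps the two scalars ($C=\delta_k\mathbb{E}[xx^T]$, $C_{\triangle}=\gamma_k\mathbb{E}[xx^T]$); this is a typo in the statement, so your claim that your factors are ``precisely'' those of the statement is not literally true and should instead flag the discrepancy. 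Second, your recentering of the loss to $H(y-x^T\theta)-H(y-x^T\theta_0)$ in order to rescue condition $v)$ is a genuine refinement over the paper's treatment: the paper simply asserts that $\hat{\theta}_n$ is consistent and declares $v)$ unnecessary, whereas without recentering the quantity $G(\theta)=\mathbb{E}[H(\varepsilon-t(\theta))]$ need not even be finite when $\varepsilon$ has no first moment, so your step (which changes neither the minimizer nor $C$, $C_{\triangle}$) makes rigorous a point the paper leaves implicit.
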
 
\begin{proof}

To show this, we verify the conditions of Theorem~\ref{main pattern robust theorem}.
To that end, first consider
\begin{align*}
    D(\varepsilon,t):=\dfrac{\partial}{\partial t}H(\varepsilon-t)&= k\mathbf{1}_{\{\varepsilon-t<-k\}}-(\varepsilon-t)\mathbf{1}_{\{\vert \varepsilon-t\vert \leq k\}}-k\mathbf{1}_{\{\varepsilon-t>k\}}.
\end{align*}
By chain rule $\nabla_{\theta}\hspace{0,1cm}\ell(y,x,\theta)=x D(\varepsilon,t(\theta))$, hence $\triangle(x,\theta_0)=\nabla\vert_{\theta=\theta_0}\hspace{0,1cm}\ell(y,x,\theta)=xD(\varepsilon,0)$ and we have $\mathbb{E}[\triangle(x,\theta_0)]=\mathbb{E}[x]\mathbb{E}[D(\varepsilon,0)]=0$, because $\mathbb{E}[D(\varepsilon,0)]=0$ by the symmetry of $\varepsilon$. We now briefly verify SLD at $\theta_0$ (\ref{stochastic Lipschitz differentiability condition}). By remark \ref{remark sufficient Lipschitz condition}, it suffices to argue the Lipschitzness of the derivative $\theta\mapsto\nabla_{\theta}\hspace{0,1cm}\ell(y,x,\theta)$; 
\begin{align*}
\Vert x D(\varepsilon,t(\theta_1))-x D(\varepsilon,t(\theta_2))\Vert&=\Vert x\Vert\cdot \vert D(\varepsilon,t(\theta_1))- D(\varepsilon,t(\theta_2))\vert\\
&\leq \Vert x\Vert\cdot \vert t(\theta_1)- t(\theta_2)\vert\\
&\leq \Vert x\Vert^2\cdot \Vert \theta_1-\theta_2\Vert,
\end{align*}
and using that the components of $x$ have fourth moments, gives (\ref{stochastic Lipschitz differentiability condition}), and hence also local stochastic differentiability $i)$.
Further, from expansion (\ref{Taylor expansion of loss}) we consider the term $\triangle(\cdot, \theta_0)=x D(\varepsilon,0)$, which has the covariance matrix $C_{\triangle}=\delta_k \mathbb{E}[xx^T]$, where 
\begin{equation*}
    \delta_k=\mathbb{E}[D(\varepsilon,0)^2]=\mathbb{E}[\varepsilon^2\mathbf{1}_{\vert\varepsilon\vert<k}]+k^2\mathbb{P}[\vert\varepsilon\vert>k].
\end{equation*}
Moreover, taking the second derivative in the weak sense $\nabla^2_{\theta}\hspace{0,1cm}\ell(y,x,\theta)=xx^T\mathbf{1}_{\{\vert \varepsilon-t(\theta)\vert \leq k\}}$, we obtain $C=\nabla^2\vert_{\theta=\theta_0}G(\theta)=\gamma_k C$, where $\gamma_k=\mathbb{P}[\vert\varepsilon\vert<k]$. Because $\mathbb{E}[xx^T]$ exists and is positive definite and $\delta_k,\gamma_k<\infty$, conditions $ii),iii)$ are verified. Finally, $\hat{\theta}_n$ is consistent hence also tight, so $iv)$ holds. We do not need to check condition $v)$, because this only serves to establish consistency. The last remaining condition to verify is Lipschitzness of $\theta\mapsto \nabla_{\theta}^2G(\theta)=\mathbb{E}[xx^T\mathbf{1}_{\{\vert \varepsilon-t(\theta)\vert \leq k\}}]=\mathbb{E}[xx^T\mathbb{E}[\mathbf{1}_{\{\vert \varepsilon-t(\theta)\vert \leq k\}}\vert x]]$ at $\theta_0$. For $\theta_1,\theta_2$ we get
\begin{align*}
    \| \nabla_{\theta}^2G(\theta_1)-\nabla_{\theta}^2G(\theta_2)\|&=\left\|\mathbb{E}\left[xx^T \left(\int_{t(\theta_1)-k}^{t(\theta_1)+k}f(\varepsilon)d\varepsilon-\int_{t(\theta_2)-k}^{t(\theta_2)+k}f(\varepsilon)d\varepsilon\right) \right]\right\|\\
    &\leq \left\|\mathbb{E}\left[xx^T \max_{\varepsilon\in\mathbb{R}} f(\varepsilon)\vert t(\theta_1)-t(\theta_2)\vert \right]\right\|\\
    &\leq \max_{\varepsilon\in\mathbb{R}} f(\varepsilon) \mathbb{E}\left[\|xx^T\|\cdot\Vert x\Vert\right]\cdot \Vert \theta_1-\theta_2\Vert,
\end{align*}
where the constant in the last expression is finite by boundedness of $f$ and finiteness of third moments of $x$. We have verified all conditions in Theorem~\ref{main pattern robust theorem}, which finishes the proof.
\end{proof}
Note that letting $k\rightarrow\infty$, we get $\delta_k\rightarrow\mathbb{E}[\varepsilon^2]=\sigma^2$ and $\gamma_k\rightarrow 1$, converging to the standard quadratic loss with $C=\mathbb{E}[xx^T]$ and $C_{\triangle}=\sigma^2 C$.

\subsubsection{Quantile regression}
Quantile regression \cite{koenker1978regression} is a popular method where one assumes that there is a linear relation between the fixed effect and a fixed  quantile of $y$. It has been studied earlier with $L_1$ regularization in \cite{l1normquantile}.

For the Quantile regression we drop the assumption of finite variance and zero mean of the noise $\varepsilon$. Instead
we assume that $\varepsilon$ has a continuous density $f$ and CDF $F$, which satisfies $F^{-1}(\alpha)=\mathbb{P}(\varepsilon\leq 0)=0$, and $f(0)>0$. This assumption implies that there is a linear relationship between the covariate $x$ and the conditional $\alpha$-quantile $$Q_{\alpha}(y|x)=\inf\{t\in\mathbb{R}: F_{y|x}(t)\leq \alpha\}=x^T\theta_0+Q_{\alpha}(\varepsilon|x)=x^T\theta_0.$$

Finally, assume that the density $f$ is bounded and Lipschitz continuous at $0$. Also, we assume that $\mathbb{E}[xx^T]$ is positive definite, and $\mathbb{E}[\Vert x\Vert^3]<\infty$. Consider $
    \ell(y, x, \theta)= \vert y-x^T\theta\vert_{\alpha} =\vert\varepsilon - t(\theta)\vert_{\alpha}$, with $t(\theta) := x^T(\theta-\theta_0)$, 
    where $\vert \cdot \vert_{\alpha} :\mathbb{R}\rightarrow\mathbb{R}$, given by
\begin{align*}
    \vert x \vert_{\alpha}&:=\begin{cases}
      (1-\alpha)(-x) &  x\leq0, \\
      \alpha x &  x>0,
    \end{cases}
\end{align*}
and $\alpha\in (0,1)$. Note that $\vert x \vert_{1/2}=\frac{1}{2}\vert x\vert$.
\begin{proposition}
    Under the above assumptions, $\hat{\theta}_n$ given by \eqref{main objective for regression}, has an error $\hat{u}_n=\sqrt{n}(\hat{\theta}_n-\theta_0)$, which converges in distribution and in pattern to $\hat{u}$, minimizing \eqref{main objective V(u)}, where
\begin{align*}
    C&=\alpha(1-\alpha)\mathbb{E}[xx^T],\\
    C_{\triangle}&=f(0)\mathbb{E}[xx^T].
\end{align*}
\end{proposition}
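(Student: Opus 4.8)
The plan is to verify, for the (recentered) check loss, the hypotheses of Theorem~\ref{main pattern robust theorem}, following the template of the Huber proof; only the stochastic Lipschitz step is genuinely different. Writing $D(\varepsilon,t)=(1-\alpha)\mathbf{1}_{\{\varepsilon<t\}}-\alpha\mathbf{1}_{\{\varepsilon>t\}}$, the chain rule gives $\nabla_\theta\ell(y,x,\theta)=xD(\varepsilon,t(\theta))$ and hence $\triangle(x,\theta_0)=xD(\varepsilon,0)$. Independence of $\varepsilon$ and $x$ together with the quantile normalization $F(0)=\alpha$ yield $\mathbb{E}[D(\varepsilon,0)]=F(0)-\alpha=0$, so $\mathbb{E}[\triangle(x,\theta_0)]=\mathbb{E}[x]\mathbb{E}[D(\varepsilon,0)]=0$; since the two indicators are disjoint, $\mathbb{E}[D(\varepsilon,0)^2]=\alpha(1-\alpha)$, giving the score covariance $\alpha(1-\alpha)\mathbb{E}[xx^T]<\infty$ (condition $iii)$). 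Because $\mathbb{E}|\varepsilon|$ need not be finite, $G$ itself may be infinite, so I would run the whole argument with the recentered loss $\tilde\ell(y,x,\theta)=\ell(y,x,\theta)-\ell(y,x,\theta_0)=|\varepsilon-t(\theta)|_\alpha-|\varepsilon|_\alpha$: it has the same argmin and the same $\triangle$ and $r$, and admits the integrable envelope $\|x\|\sup_{\theta\in K}\|\theta-\theta_0\|$ (using $\mathbb{E}\|x\|<\infty$), which supplies condition $v)$ and, via coercivity of the recentered population objective, consistency and hence uniform tightness in $iv)$.

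For the curvature I would condition on $x$ to get $\mathbb{E}[D(\varepsilon,t)\mid x]=F(t)-\alpha$, so $\nabla G(\theta)=\mathbb{E}[x(F(x^T(\theta-\theta_0))-\alpha)]$ vanishes at $\theta_0$, and differentiating once more (dominated convergence, $f$ bounded, $\mathbb{E}\|x\|^2<\infty$) gives the Hessian $\nabla^2G(\theta)=\mathbb{E}[xx^Tf(x^T(\theta-\theta_0))]$, equal to $f(0)\mathbb{E}[xx^T]$ at $\theta_0$. This is positive definite since $f(0)>0$ and $\mathbb{E}[xx^T]\succ0$ (condition $ii)$), and convexity of the check loss then makes $\theta_0$ the unique minimizer. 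Thus the curvature and score covariances are $f(0)\mathbb{E}[xx^T]$ and $\alpha(1-\alpha)\mathbb{E}[xx^T]$ respectively, reproducing the classical quantile sandwich (the two symbols appear transposed in the displayed statement). Condition $ii')$ follows because boundedness and Lipschitz continuity of $f$ at $0$ give $\|\nabla^2G(\theta)-\nabla^2G(\theta_0)\|\lesssim\mathbb{E}\|x\|^3\,\|\theta-\theta_0\|$, finite by the third-moment assumption, exactly as in the Huber argument.

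The crux — and the only place the proof departs from the Huber case — is condition $i')$, the stochastic Lipschitz differentiability \eqref{stochastic Lipschitz differentiability condition}. Remark~\ref{remark sufficient Lipschitz condition} is unavailable here, since the score $xD(\varepsilon,t)$ is a step function of $t$ and is not Lipschitz. Instead I would use the exact identity
\begin{equation*}
\tilde r(x,\theta_1)-\tilde r(x,\theta_2)=\int_{t(\theta_2)}^{t(\theta_1)}\big(D(\varepsilon,s)-D(\varepsilon,0)\big)\,ds,\qquad D(\varepsilon,s)-D(\varepsilon,0)=\mathrm{sign}(s)\,\mathbf{1}_{\{0\wedge s<\varepsilon<0\vee s\}},
\end{equation*}
which bounds the increment pointwise by $\|x\|\,\|\theta_1-\theta_2\|\,\mathbf{1}_{\{|\varepsilon|\le\|x\|R/\sqrt n\}}$ whenever $\theta_1,\theta_2\in\theta_0+K/\sqrt n$ and $K\subset B_R(0)$. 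Dividing by $\|\theta_1-\theta_2\|$, the normalized increments have envelope $H_n(x)=\|x\|\mathbf{1}_{\{|\varepsilon|\le\|x\|R/\sqrt n\}}$, and the bounded density gives $\mathbb{E}[H_n^2]\le 2R(\sup f)\mathbb{E}\|x\|^3/\sqrt n\to0$ — precisely where $\mathbb{E}\|x\|^3<\infty$ is used. Since the thresholds $\{\varepsilon<x^T(\theta-\theta_0)\}$ form a VC class, the normalized increments form a VC-subgraph class with uniformly bounded entropy integral, and a maximal inequality of the form $\mathbb{E}\sup|\nu_n(\cdot)|\lesssim\|H_n\|_{L_2}\to0$ delivers the required $L_n=o_p(1)$. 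This establishes $i')$ (and a fortiori $i)$ through \eqref{local stochastic differentiability}), so all hypotheses of Theorem~\ref{main pattern robust theorem} hold and both the distributional and the pattern convergence follow. I expect this empirical-process bound over the shrinking VC class to be the main obstacle; the remaining steps are bookkeeping parallel to the Huber case.
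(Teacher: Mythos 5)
Your proposal is correct, and its scaffolding coincides with the paper's: the score computation $\triangle(x,\theta_0)=xD(\varepsilon,0)$ with covariance $\alpha(1-\alpha)\mathbb{E}[xx^T]$, the Hessian $\nabla^2_\theta G(\theta)=\mathbb{E}[xx^Tf(t(\theta))]$ equal to $f(0)\mathbb{E}[xx^T]$ at $\theta_0$, and its Lipschitzness from $\mathbb{E}\Vert x\Vert^3<\infty$ are exactly the paper's steps; you are also right that the displayed statement transposes $C$ and $C_{\triangle}$ (the paper's own proof yields $C=f(0)\mathbb{E}[xx^T]$, $C_{\triangle}=\alpha(1-\alpha)\mathbb{E}[xx^T]$). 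The genuine difference is how you establish the crux, condition \eqref{stochastic Lipschitz differentiability condition}. The paper (appendix \ref{appendix stochastic Lipschitz for Quantile}) fixes a ball $B$ around $\theta_0$, proves the normalized-increment family $\mathcal{F}_B$ has finite VC dimension by writing it explicitly as $(0\wedge\mathcal{F}_1)\vee(\mathcal{F}_2\wedge\mathcal{F}_3)$ with the $\mathcal{F}_i$ classes of linear functions, concludes it is Donsker, and extracts $L_n=o_p(1)$ from the limiting Gaussian process via the Portmanteau lemma and Dudley's entropy bound, the smallness coming from $\mathrm{diam}(B)\downarrow 0$; this route needs only $\mathbb{E}\Vert x\Vert^2<\infty$. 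You instead work directly on the shrinking index set $\theta_0+K/\sqrt{n}$, bound the normalized increments by the envelope $H_n(x)=\Vert x\Vert\mathbf{1}_{\{\vert\varepsilon\vert\le\Vert x\Vert R/\sqrt{n}\}}$ with $\Vert H_n\Vert_{L_2}^2\lesssim\mathbb{E}\Vert x\Vert^3/\sqrt{n}$, and invoke a VC maximal inequality of the form $\mathbb{E}\sup\vert\nu_n\vert\lesssim J\cdot\Vert H_n\Vert_{L_2}$ (e.g.\ Theorem 2.14.1 in \cite{vanderVaart1996}); this is non-asymptotic, yields an $n^{-1/4}$ rate rather than bare $o_p(1)$, and avoids the weak-convergence detour, at the price of the third moment (which the proposition assumes anyway). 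Two remarks. First, your one-line claim that the normalized increments are VC-subgraph ``since the thresholds form a VC class'' is the only under-argued step: VC stability does not automatically survive the integration over $s$ that defines the increments, and what fills this gap is precisely the paper's max/min-of-linear-functions representation (the claim is true, so this is a fixable omission, not an error). Second, your recentering $\tilde\ell(\cdot,\theta)=\ell(\cdot,\theta)-\ell(\cdot,\theta_0)$ to keep the population objective finite when $\mathbb{E}\vert\varepsilon\vert=\infty$ is actually more careful than the paper, which differentiates $\mathbb{E}[\ell]$ without addressing its finiteness.
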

\begin{proof}
As for the Huber regression, we verify conditions of Theorem~\ref{main pattern robust theorem}. We verify the SLD condition (\ref{stochastic Lipschitz differentiability condition}) for the quantile loss in appendix \ref{appendix stochastic Lipschitz differentiability for the Huber loss}. In particular, this gives $i)$. Furthermore, the loss function $\vert \cdot \vert_{\alpha}$ is everywhere differentiable except at $0$, with weak derivative $\vert x \vert_{\alpha}^{'}= - (1-\alpha)\mathbf{1}_{\{x \leq 0\}}+\alpha \mathbf{1}_{\{x > 0\}}$. Note that $t\mapsto\mathbb{E}[\vert\varepsilon-t\vert_{\alpha}]$ attains its minimum at the solution to $0=- (1-\alpha)\mathbb{P}(\varepsilon \leq t)+\alpha \mathbb{P}(\varepsilon > t)$, which is the $\alpha$ quantile of $\varepsilon$. This minimum is obtained by setting $t=0$, because $\mathbb{P}(\varepsilon\leq 0)=\alpha$. Since $t(\theta_0)=0$, it follows that $G(\theta)$ has a minimum value at $\theta_0$.

By chain rule $\nabla_{\theta}\hspace{0,1cm}\ell(y,x,\theta) = -x \vert \varepsilon - t(\theta) \vert_{\alpha}^{'}=x((1-\alpha)\mathbf{1}_{\{\varepsilon \leq t(\theta)\}}-\alpha \mathbf{1}_{\{\varepsilon > t(\theta)\}})$ and the $\triangle(\cdot,\theta_0)$ term in expansion (\ref{Taylor expansion of loss}) reads $\triangle(\varepsilon,x,\theta_0)=-x \vert \varepsilon\vert_{\alpha}^{'}$. Consequently, the covariance matrix of $\triangle(\cdot,\theta_0)$ equals $C_{\triangle}=\delta \mathbb{E}[xx^T]$, where
\begin{align*}
    \delta= \mathbb{E}\bigr[ (\vert \varepsilon_i\vert_{\alpha}^{'})^2\bigr]&=  (1-\alpha)^2\mathbb{P}(\varepsilon_i\leq 0)+\alpha^2\mathbb{P}(\varepsilon_i> 0)\\
    &=(1-\alpha)^2\alpha+\alpha^2(1-\alpha)\\
    &=\alpha(1-\alpha).
\end{align*}
This verifies condition $iii)$. Moreover,
\begin{align*}
    \nabla_{\theta}\mathbb{E}\bigr[\hspace{0,1cm}\ell(y,x,\theta)\bigr]&=\mathbb{E}\bigr[\hspace{0,1cm}\nabla_{\theta}\ell(y,x,\theta)\bigr]\\
    &=\mathbb{E}\Bigr[x\mathbb{E}\bigr[(1-\alpha)\mathbf{1}_{\{\varepsilon \leq t(\theta)\}}-\alpha \mathbf{1}_{\{\varepsilon_i > t(\theta)\}}\vert x\bigr]\Bigr]\\
    &=\mathbb{E}\Bigr[x\bigr((1-\alpha)\mathbb{P}({\varepsilon \leq t(\theta)} \vert x)-\alpha(1-\mathbb{P}({\varepsilon \leq t(\theta)} \vert x))\bigr)\Bigr]\\
    &= \mathbb{E}\Bigr[x\bigr(\mathbb{P}({\varepsilon \leq t(\theta)} \vert x)-\alpha\bigr)\Bigr],
\end{align*}

Thus $\nabla^2_{\theta}G(\theta)=\mathbb{E}[f(t(\theta))xx^T]$, which exists for every $\theta$ by boundedness of $f$, and it is Lipschitz at $\theta_0$ by Lipschitzness of $f$ at $0$. Indeed, for $\theta_1,\theta_2$ around $\theta_0$, we can bound
$$\vert f(t(\theta_1))-f(t(\theta_2))\vert\leq L \vert x^T(\theta_1-\theta_2)\vert\leq L \Vert x\Vert \cdot \Vert \theta_1 - \theta_2\Vert.$$
Therefore
\begin{equation*}
    \| \nabla^2_{\theta}G(\theta_1)-\nabla^2_{\theta}G(\theta_2)\|=\|\mathbb{E}[f(t(\theta_1))-f(t(\theta_2)) xx^T]\|\leq L\cdot \mathbb{E}\left[\Vert x \Vert \cdot\|xx^T\|\right] \Vert\theta_1-\theta_2\Vert.
\end{equation*}
The expectation in the last term exists because $\mathbb{E}[\Vert x\Vert^3]<\infty$. Finally, we obtain the Hessian matrix $C=\nabla^2\vert_{\theta=\theta_0}G(\theta)=f(0) \mathbb{E}[xx^T]$, and $ii)$ holds. Also, $iv)$ holds since $\hat{\theta}_n$ is consistent, hence also tight. Consequently, Theorem~\ref{main pattern robust theorem} applies. 
\end{proof}

\begin{example}
Let $\alpha=1/2$, so that $\ell(y,x,\theta)=\vert y-x^T\theta_0\vert/2$ is the median loss. Assume that $\varepsilon$ follows a Laplace distribution with density $f(\varepsilon)=e^{-\vert \varepsilon\vert}/2$. Setting $t=t(\theta)=x^T(\theta-\theta_0)$, we can write $G(\theta)=\mathbb{E}[(1/2)\int\vert\varepsilon -t\vert f(\varepsilon)d\varepsilon|x]$. Focusing on the inner integral $\tilde{G}(t)=(1/2)\int\vert\varepsilon -t\vert f(\varepsilon)d\varepsilon$ the above calculation asserts that $\tilde{G}''(t)=f(t)=e^{-\vert t\vert}/2$, which is bounded and Lipschitz continuous with Lipschitz constant $1/2$.
\end{example}

\section{Simulations}

In this section, we illustrate our asymptotic results using the example of logistic regression. We consider the case where $p = 30$ and $n \in \{500, 1000, 5000, 10000\}$. The rows of the design matrix are generated as independent and identically distributed random vectors from a multivariate normal distribution with zero mean and a block-diagonal covariance matrix satisfying
\[
\mathbb{E}[x^\top x] = I \otimes \Sigma^0,
\]
where $\Sigma^0$ is a compound symmetry $5\times5$ matrix defined by $\Sigma^0_{i,i} = 1$ and $\Sigma^0_{i,j} = 0.8$ for $i \neq j$, $I$ is the $6\times6$ identity, and $\otimes$ is the Kronecker product.

The true parameter vector $\theta^0$ is defined as follows:
\[
\theta^0_1 = \ldots = \theta^0_5 = -2, \quad \theta^0_6 = \ldots = \theta^0_{10} = 1, \quad \theta^0_{11} = \ldots = \theta^0_{30} = 0.
\]

To estimate the regression coefficients, we use the SLOPE estimator with the penalty function
\[
f(\theta) = \sum_{j=1}^p \lambda_j |\theta|_{(j)},
\]
where $|\theta|_{(1)} \geq |\theta|_{(2)} \geq \dots \geq |\theta|_{(p)}$ denote the ordered absolute values of the components of $\theta^0$, and
\[
\lambda_j = \alpha \cdot \Phi^{-1} \left(1 - \frac{0.2j}{2p} \right), \quad j = 1, 2, \dots, p,
\]
with $\Phi^{-1}(\cdot)$ denoting the quantile function (inverse CDF) of the standard normal distribution.

Figure~\ref{fig:RMSE} shows the dependence of the Root Mean Square Error,
\[
\mathrm{RMSE} = \sqrt{\mathbb{E}\left(n \|\hat{\theta}_S - \theta^0\|^2\right)},
\]
where $\hat{\theta}_S$ is the SLOPE estimator of $\theta$, as a function of the tuning parameter $\alpha$ for various sample sizes $n$. We also compare the empirical RMSE to its asymptotic counterpart, computed by averaging the squared norms of 50{,}000 independent realizations of the vector $\hat{u}_n$, obtained by optimizing the function $V(u)$ defined in equation~(\ref{main objective V(u)}), across corresponding realizations of the noise vector $W$.

As expected, for each fixed value of $\alpha$, the RMSE approaches its asymptotic value as $n$ increases. Moreover, we observe that the rate of this convergence tends to decrease as $\alpha$ increases.

Additionally, Figure~\ref{fig:RRE} illustrates the rapid convergence of the Relative Residual Error (RRE) \eqref{eq:RRE} to its corresponding asymptotic value.

\begin{figure}[ht!]
    \centering
    \begin{minipage}{0.49\linewidth}
        \centering
        \includegraphics[width=\linewidth]{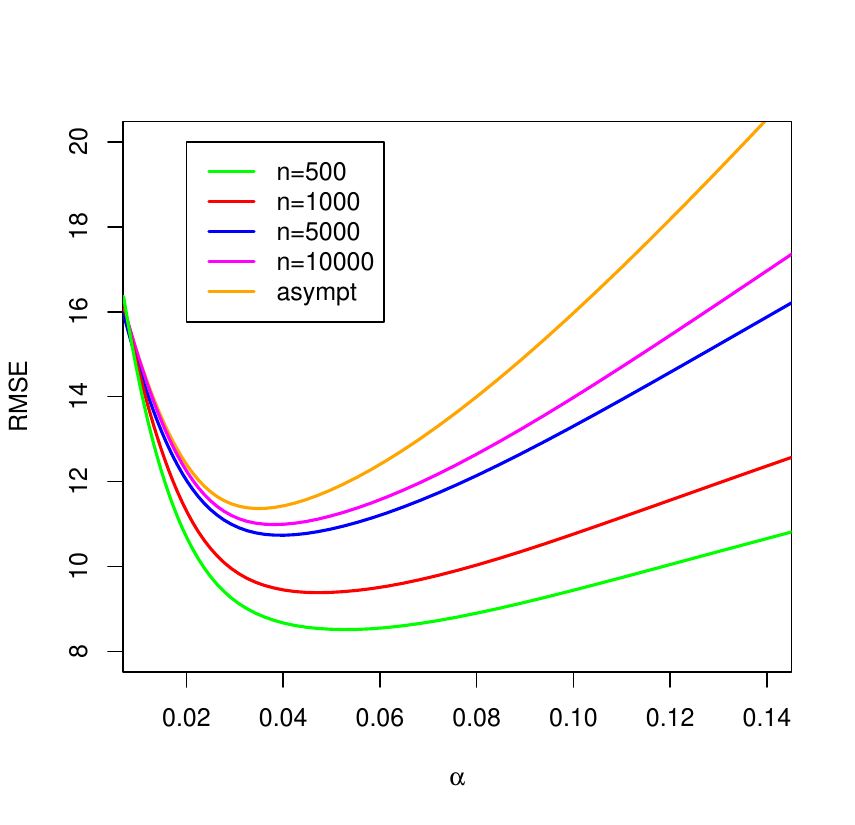}
        \caption{RMSE as a function of $n$ and $\alpha$.\\\phantom{RMSE as a function of $n$ and $\alpha$.}}
        \label{fig:RMSE}
    \end{minipage}
    \hfill
    \begin{minipage}{0.49\linewidth}
        \centering
        \includegraphics[width=\linewidth]{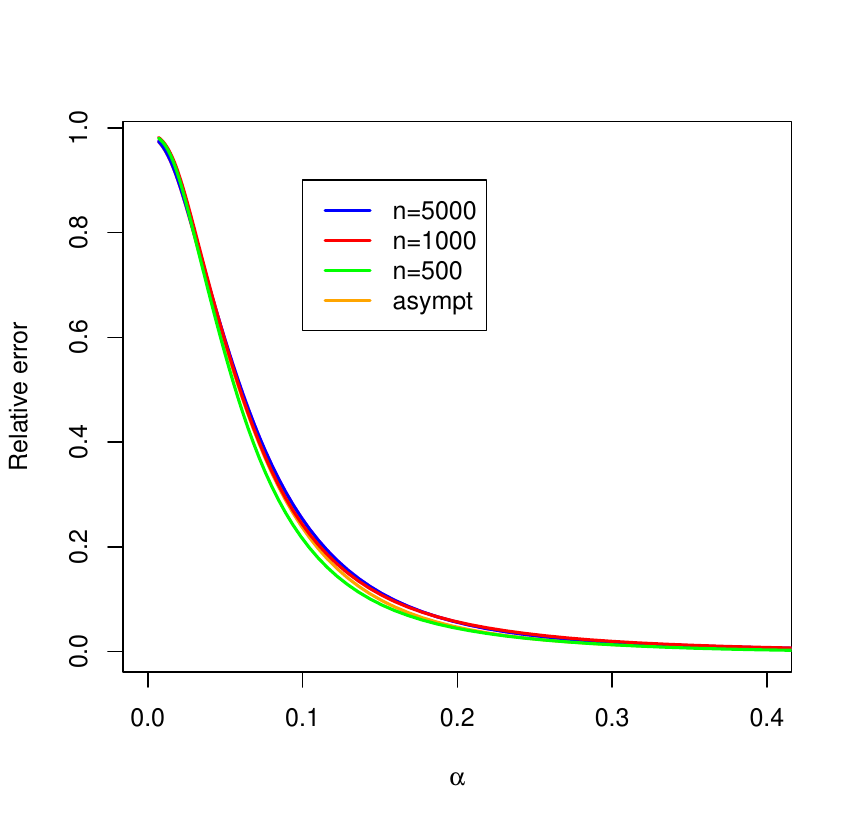}
        \caption{Relative Residual Error as a function of $n$ and $\alpha$.}
        \label{fig:RRE}
    \end{minipage}
\end{figure}

Furthermore, Figure~\ref{fig:pattern} demonstrates a systematic, though relatively slow, convergence of the probability of perfect pattern recovery to its asymptotic counterpart. This slower rate of convergence is primarily due to challenges in correctly clustering the coefficients associated with the group of smaller absolute values. Specifically, the decline in tuning parameters corresponding to this group is less pronounced, making it more difficult to separate the coefficients. In contrast, the beginning of the $\lambda$ sequence exhibits a more rapid decay, facilitating faster and more accurate clustering of the coefficients in the group with the larger absolute value.

\begin{figure}[ht!]
    \centering
    \includegraphics[width=0.5\linewidth]{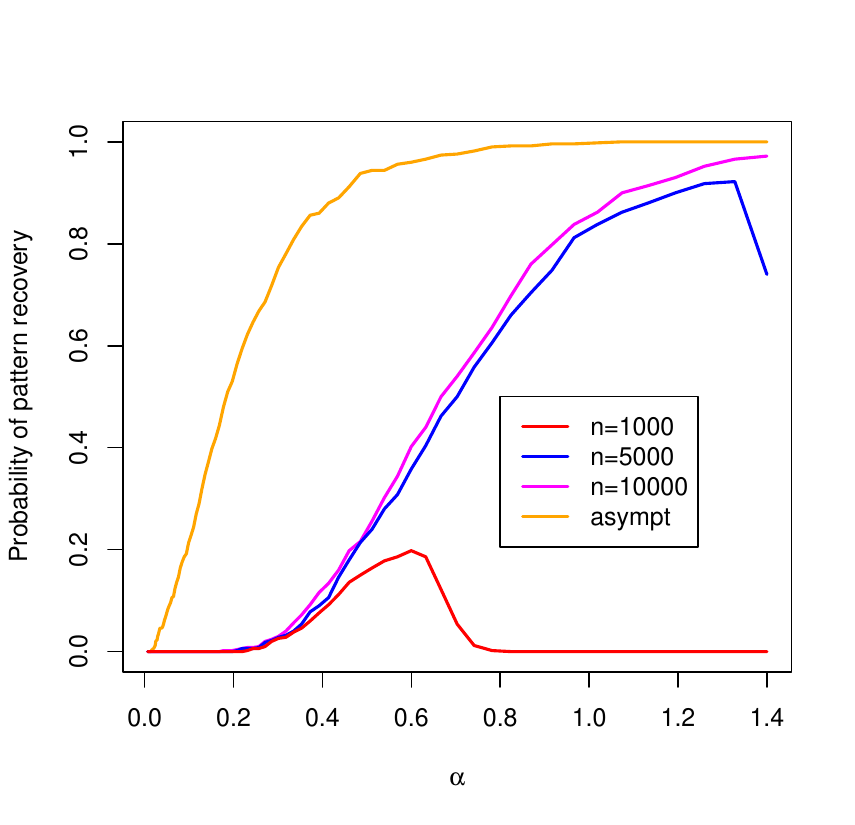}
    \caption{Probability of perfect pattern recovery as a function of $n$ and $\alpha$.}
    \label{fig:pattern}
\end{figure}

\section{Discussion}
Distributional convergence of model patterns induced by convex regularizers is a non-trivial phenomenon — an issue that has been largely overlooked in the prior literature. In this paper, through Theorem \ref{main pattern robust theorem}, we establish the asymptotic distribution of lower-dimensional model patterns associated with regularizers, including Lasso, Fused Lasso, SLOPE, and Elastic Net. Notably, building on the concept of stochastic differentiability \cite{pollard_1985}, we introduce the notion of stochastic Lipschitz differentiability. This extends the asymptotic characterization of pattern behaviour beyond linear regression to generalized linear models and accommodates robust loss functions such as the Huber and quantile losses. As a result, it unifies the asymptotic findings of \cite{pollard_1985} and \cite{fu2000asymptotics} within a broader framework.

Consequently, we characterize the limiting probability of recovering the true model using a given regularizer in the regime where $p$ is fixed and $n$ approaches infinity. Despite its limitations, we believe that our framework offers a new perspective on model recovery and has the potential to inspire further methodological advancements. In particular, it holds promise for second-order methods designed to recover the lower-dimensional structure within a signal.

\section*{Acknowledgments}
The authors acknowledge the support of the Swedish Research Council, grant no. 2020-05081. We would like to thank A.W. van der Vaart for his useful comments on the local stochastic differentiability condition. Further, we thank Henrik Bengtsson, Dragi Anevski, and Hermann Thorisson for their discussions and Wojciech Reichel for pointing out the application in GLMs.

\bibliographystyle{plain}
\newpage
\bibliography{citation}

\newpage

\begin{appendix}
\section{Appendix}
\subsection{Stochastic Lipschitz differentiability for the Huber loss}\label{appendix stochastic Lipschitz differentiability for the Huber loss}
We verify the statement in Remark~\ref{remark sufficient Lipschitz condition}, that if $$\Vert\ell'(x,\theta_1)-\ell'(x,\theta_2)\Vert\leq M(x) \Vert \theta_1-\theta_2\Vert$$ for $\theta_1,\theta_2\in B$ and $\mathbb{E}[M(X)^2]<\infty$, then the SLD condition (\ref{stochastic Lipschitz differentiability condition}) holds. For $\triangle(x,\theta_0):=\ell'(x,\theta_0)$,

\begin{align*}
\vert\nu_n(\tilde{r}(\cdot,\theta_1)-\tilde{r}(\cdot,\theta_2))\vert&=\vert\nu_n(\ell(\cdot,\theta_1)-\ell(\cdot,\theta_2)-\ell'(\cdot,\theta_0)(\theta_1-\theta_2))\vert\\
&=\vert\nu_n(\ell'(\cdot,\theta^*)-\ell'(\cdot,\theta_0))(\theta_1-\theta_2)\vert\\
&\leq \Vert\nu_n(\ell'(\cdot,\theta^*)-\ell'(\cdot,\theta_0))\Vert\cdot \Vert\theta_1-\theta_2\Vert,
\end{align*}
for some $\theta^*$ between $\theta_1$ and $\theta_2$. Now, the family $\mathcal{F}=\{\ell'(\cdot,\theta):\theta\in B\}$  is a Donsker class (see Example 19.7 \cite{van1998asymptotic}). This means that $\nu_n\ell'(\cdot,\theta)$ converges weakly as a stochastic process in the space $\ell^{\infty}(B)$ with the norm $\Vert f\Vert_B:=\sup_{\theta\in B}\Vert f(\theta)\Vert$. Thus for any $\delta>0$, by the Portmanteau Lemma:
\begin{align*}
    \limsup_{n\rightarrow\infty}\mathbb{P}[\Vert\nu_n(\ell'(\cdot,\theta)-\ell'(\cdot,\theta_0))\Vert_B\geq \delta]\leq \mathbb{P}[\Vert Z(\theta)\Vert_B\geq \delta]\leq \delta^{-1}\mathbb{E}[\Vert Z(\theta)\Vert_B],
\end{align*}
where $Z$ is the corresponding limiting centered Gaussian process with $Z(\theta_0)=0$. 
By the Dudley's integral inequality (see for instance \cite{vershynin2018high}) one can bound the supremum of a centered Gaussian process:
\begin{equation*}
    \mathbb{E}[\Vert Z(\theta)\Vert_B]\leq c \int_0^{\operatorname{diam}(B)}\sqrt{\log \hspace{0,1cm}N(B,d,\varepsilon)}d\varepsilon,
\end{equation*}
where $N(B,d,\varepsilon)$ is the number of closed $\varepsilon$-balls with centers in $B$, needed to cover $B$, with distance defined as $d(\theta_1,\theta_2)=\mathbb{E}[\Vert Z(\theta_1)-Z(\theta_2)\Vert^2]^{1/2}$, and $c$ a constant not depending on $B$. The diameter is defined as $\operatorname{diam}(B)=\sup\{\|x-y\|:x,y\in B\}$. Now crucially, by the Lipschitzness assumption on $\ell'(x,\theta)$;
\begin{equation*}
    d(\theta_1,\theta_2)=\mathbb{E}[\Vert Z(\theta_1)-Z(\theta_2)\Vert^2]^{1/2}=\mathbb{E}[\Vert\ell'(X,\theta_1)-\ell'(X,\theta_2)\Vert^2]^{1/2}\leq\gamma\Vert\theta_1-\theta_2\Vert,
\end{equation*}
where $\gamma:=\mathbb{E}[M(X)^2]^{1/2}$, and $\Vert\cdot\Vert$ denotes the usual Euclidean distance. Therefore an $\varepsilon$-ball w.r.t. $\gamma\Vert\cdot\Vert$ is smaller than $\varepsilon$-ball w.r.t. $d$, i.e. $\{\theta:\gamma\|\theta\|<\varepsilon\}\subset \{\theta:d(\theta,0)<\varepsilon\}$, and thus $N(B,d,\varepsilon)\leq N(B,\gamma\Vert\cdot\Vert,\varepsilon)$. 
The number of Euclidean $\varepsilon$-balls needed to cover $B\subset\mathbb{R}^p$ is proportional to $(\operatorname{diam}(B)/\varepsilon)^p$, and we can bound the above integral from above to obtain
\begin{equation*}
\mathbb{E}[\Vert Z(\theta)\Vert_B]\leq \tilde{c}\cdot \operatorname{diam}(B)\int_0^{1}\sqrt{\log \hspace{0,1cm}((1/\varepsilon)^p)}d\varepsilon.
\end{equation*}
The displayed integral exists, and $\mathbb{E}[\Vert Z(\theta)\Vert_B]$ converges to $0$ as $\operatorname{diam}(B)\downarrow 0$.

Consequently, for 
$\theta_1,\theta_2\in\theta_0+K/\sqrt{n}$, 
$\Vert\nu_n(\ell'(\cdot,\theta^*)-\ell'(\cdot,\theta_0))\Vert=o_p(1)$, which proves (\ref{stochastic Lipschitz differentiability condition}).

\subsection{Stochastic Lipschitz condition for the Quantile loss}\label{appendix stochastic Lipschitz for Quantile}
We verify SLD (\ref{stochastic Lipschitz differentiability condition}) for median loss $\ell(y,x,\theta)=\vert y-x^T\theta\vert/2$ at the point $\theta_0$. For quantile, the argument is analogous. We only assume $\mathbb{E}[\Vert x\Vert^2]<\infty$. Consider the family:
\begin{equation*}
   \mathcal{F}_B = \left\{M_{\theta_1,\theta_2}(y,x):=\dfrac{\tilde{r}(y,x,\theta_1)-\tilde{r}(y,x,\theta_2) }{\Vert\theta_1-\theta_2\Vert}: \theta_1,\theta_2\in B, \theta_1\neq\theta_2\right\},
\end{equation*}
for some ball $B$ around $\theta_0$ and $\tilde{r}(\cdot,\theta_1)=\ell(\cdot,\theta)-\ell(\cdot,\theta_0)-(\theta-\theta_0)^T\triangle(\cdot,\theta_0)$. We have
\begin{align*}
    M_{\theta_1,\theta_2}(y,x)&=\dfrac{\vert y-x^T\theta_1\vert-\vert y-x^T\theta_2\vert-(\theta_1-\theta_2)^Tx\cdot \operatorname{sgn}(y-x^T\theta_0)}{2\Vert\theta_2-\theta_1\Vert}.
\end{align*}

One can verify that
\begin{equation*}
    \vert M_{\theta_1,\theta_2}(y,x) \vert \leq \Vert x\Vert \mathbf{1}_{\{\vert y-x^T\theta_0\vert\leq\max\{\vert x^T(\theta_1-\theta_0)\vert, \vert x^T(\theta_2-\theta_0)\vert\}\}}.
\end{equation*}
In particular, $M_{\theta_1,\theta_2}(y,x)$ vanishes with high probability for $\theta_1,\theta_2$ close to $\theta_0$. Also, the family $\mathcal{F}_B$ has an envelope function $F(y,x)=\Vert x\Vert$. We now show that $\mathcal{F}_B$ has a finite VC-dimension. 
Reparametrizing with $z=(y,x)$, and $v_i=(1,-\theta_i) \in\mathbb{R}^{p+1}$, for $i=0,1,2$; 
\begin{align*}
    M_{v_1,v_2}(z)=\dfrac{\vert v_1^Tz\vert-\vert v_2^T z\vert + (v_2-v_1)^Tz\cdot \operatorname{sgn}(v_0^Tz)}{2\Vert v_1-v_2\Vert}.
\end{align*}

Define the linear functions $f_1(z)=v_1^Tz/(2\Vert v_1-v_2\Vert), f_2(z)=v_2^Tz/(2\Vert v_1-v_2\Vert)$. For simplicity, consider $M_{v_1,v_2}(z)$ only in the region $v_0^Tz>0$. 
(The argument for $v_0^Tz<0$ 
is analogous.) Since for any $a,b,c,d\in\mathbb{R}$:
$$a\lor b- c\lor d=((a-c)\land(a-d))\lor((b-c)\land (b-d)),$$
where $a\lor b:=\max\{a,b\}, a\land b:= \min\{a,b\}$, we get
\begin{align*}
    M_{v_1,v_2}&=(f_1\lor-f_1)-(f_2\lor-f_2) + (f_2-f_1)\\
    &=(((f_1-f_2)\land (f_1+f_2))\lor((-f_1-f_2)\land (-f_1+f_2))) + (f_2-f_1)\\
    &=(0\land 2(f_1+f_2))\lor(-2f_1\land(-2f_1+2f_2)),
\end{align*}
 Consequently, the family $\mathcal{F}_B$ 
 has finite VC-dimension $V(\mathcal{F}_B)$, as it can be represented as $\mathcal{F}=(0 \land \mathcal{F}_1)\lor (\mathcal{F}_2\land\mathcal{F}_3)$, with $\mathcal{F}_1,\mathcal{F}_2,\mathcal{F}_3$ consisting only of linear functions. As a result, $\mathcal{F}_B$ is a Donsker class and the empirical process $\nu_n M_{\theta_1,\theta_2}(\cdot)$ converges weakly in the normed space $\ell^{\infty}(\mathcal{F}_B)\cong \ell^{\infty}(B^2)$, where $B^2:=\{(\theta_1,\theta_2)\in B\times B, \theta_1\neq\theta_2\}$ and $\Vert f(\theta_1,\theta_2)\Vert_{B^2}:=\sup_{(\theta_1,\theta_2)\in B^2}\vert f(\theta_1,\theta_2)\vert$, 
 to a centered Gaussian process $Z(\theta_1,\theta_2)$. By Theorem 2.6.7 in \cite{vanderVaart1996}, the covering numbers are polynomial in $V(\mathcal{F}_B)$:
 \begin{equation*}
N(\varepsilon \Vert F\Vert_{L^2(P)}, \mathcal{F}_B, L_2(P))\leq K \varepsilon^{-2(V(\mathcal{F}_B)-1)},
 \end{equation*}
where $P$ is the law of $(Y,X)$, and $\Vert F\Vert_{L^2(P)}^2= \int F(y,x)^2dP = \mathbb{E}[F(X,Y)^2] = \mathbb{E}\Vert X\Vert^2<\infty$ 
By the Dudley's inequality (Corollary 2.2.8 \cite{vanderVaart1996}),
\begin{align*}
    \mathbb{E}[\Vert Z(\theta_1,\theta_2)\Vert_{B^2}] \leq K \int_{0}^{\operatorname{diam}(B^2)}\sqrt{\log N(\varepsilon, \mathcal{F}_B, L_2(P))}d\varepsilon,
\end{align*}
which converges by the above VC-bound and DCT to $0$, as $\operatorname{diam}(B)\rightarrow 0$. Finally, using that $\mathcal{F}_B$ is $P-$ Donsker, we obtain: 
\begin{align*}
    \limsup_{n\rightarrow\infty}\mathbb{P}[\Vert\nu_n(M_{\theta_1,\theta_2})\Vert_{B^2}\geq \delta]\leq \mathbb{P}[\Vert Z(\theta_1,\theta_2)\Vert_{B^2}\geq \delta]\leq \delta^{-1}\mathbb{E}[\Vert Z(\theta_1,\theta_2)\Vert_{B^2}],
\end{align*}
which converges to $0$ as $\operatorname{diam}(B)\rightarrow 0$ and (\ref{stochastic Lipschitz differentiability condition}) follows.

\subsection{Lipschitzness of the Taylor rest term}\label{appendix Lipschitzness of the Taylor rest term}
Let $f:\mathbb{R}^p\rightarrow \mathbb{R}$ be a $C^2$ function in some neighborhood $U$ around $x_0$, with Taylor expansion
\begin{equation*}
    f(x)=\underbrace{f(x_0)+f'(x_0)(x-x_0)+(x-x_0)^Tf''(x_0)(x-x_0)/2}_{P(x)} + r(x)\Vert x-x_0\Vert^2,
\end{equation*}
where $r(x)\rightarrow 0 $ as $x\rightarrow x_0$. If $f''(x)$ is locally Lipschitz continuous at the point $x_0$, i.e. there exists a constant $L_f\geq 0$ such that
\begin{equation*}
    \Vert f''(x)-f''(x_0) \Vert \leq L_f \Vert x -x_0\Vert\;\;\forall x \in U,
\end{equation*}
then $r(x)$ is Lipschitz continuous on $U$.
\begin{proof}
    Let wlog $x_0=0$. Observe that the function $g(x):=f(x)-P(x)$ has vanishing first two derivatives at zero; $g(0)=0, g'(0)=0, g''(0)=0$. Furthermore, $g''$ is Lipschitz continuous at zero, i.e. there exists a constant $L\geq 0$ such that;
    \begin{equation}\label{lipschitzness of g'' around 0}
        \Vert g''(x)\Vert = \Vert g''(x)-g''(0) \Vert \leq L \Vert x \Vert,
    \end{equation}
     for any $x\in U$. We have $r(x)=g(x)/\Vert x\Vert^2$. For $x,y\in U$ we want to bound
    \begin{equation*}
        \frac{\vert r(x) -r(y)\vert}{\Vert x-y\Vert}=\frac{h(x,y)}{\Vert x-y\Vert\cdot \Vert x\Vert^2\cdot \Vert y\Vert^2},
    \end{equation*}
where $$h(x,y):=\vert\Vert y\Vert^2 g(x)-\Vert x\Vert^2 g(y)\vert. $$
We shall wlog assume that $\Vert y\Vert\geq \Vert x \Vert$. Since $g$ is $C^2$ on $U$, we can expand $g(y)$ around $x$ as
\begin{align*}
    g(y)&=g(x)+g'(x)(y-x)+(y-x)^Tg''(\xi)(y-x)/2,
\end{align*}
for some $\xi$ on the line connecting $x$ and $y$.\footnote{This follows from second order Taylor expansion of the $C^2$ function $\tilde{g}(t)=(g \circ \gamma)(t):\mathbb{R}\mapsto\mathbb{R}$, where $\gamma:\mathbb{R}\mapsto\mathbb{R}^p$, $\gamma(t)=x+t(y-x)$. Then $g(y)-g(x)$ equals $\tilde{g}(1)-\tilde{g}(0)=\tilde{g}'(0)(1-0)+(1/2)\tilde{g}''(t^*)(1-0)^2$ for some $t^*\in[0,1]$. Finally, by chain rule $\tilde{g}'(0)=g'(x)(y-x)$ and $\tilde{g}''(t^*)=(y-x)^Tg''(\xi)(y-x)$ for $\xi=\gamma(t^*)$. } This gives
\begin{align*}
    h(x,y) &\leq \underbrace{(\Vert y\Vert^2-\Vert x\Vert^2) \vert g(x) \vert}_{T_1} + \underbrace{\Vert x\Vert^2\vert g'(x)(y-x)\vert}_{T_2}+\underbrace{\Vert x\Vert^2\vert(y-x)^Tg''(\xi)(y-x)/2 \vert}_{T_3}
\end{align*}
We expand $g$ around zero as $g(x)=x^Tg''(\zeta)x/2$, for some $\zeta$ on the line segment between $0$ and $x$. Using (\ref{lipschitzness of g'' around 0}) and $\Vert \zeta\Vert\leq \Vert y\Vert$, we bound $$\vert g(x)\vert=\vert x^Tg''(\zeta)x/2\vert\leq \Vert x\Vert^2 \Vert g''(\zeta)\Vert/2\leq \Vert x \Vert^2  L\Vert\zeta\Vert /2 \leq \Vert x \Vert^2  L\Vert y\Vert /2.$$ Finally, by the triangle inequality $$\Vert y\Vert^2-\Vert x\Vert^2 =( \Vert y\Vert-\Vert x\Vert ) \cdot (\Vert x\Vert + \Vert y \Vert)\leq \Vert y-x\Vert 2\Vert y\Vert,$$ so that
\begin{align*}
    T_1 
    &\leq \Vert y-x\Vert \cdot \Vert y \Vert^2  \Vert x \Vert^2  L,
\end{align*}
Similarly, expanding $g'$ around zero yields $g'(x)=g'(0)+x^T g''(\chi)=x^T g''(\chi)$, for some $\chi$ between $0$ and $x$. Using \eqref{lipschitzness of g'' around 0} and that $\Vert x\Vert, \Vert \chi \Vert, \Vert \xi\Vert, \Vert(y-x)/2\Vert$ are all bounded by $\Vert y\Vert$, one can verify that both $T_2$ and $T_3$, are bounded by $\Vert y-x \Vert \cdot \Vert y \Vert^2 \Vert x\Vert^2 L$. Consequently, $h(x,y)\leq 3L\Vert y-x \Vert \cdot \Vert y \Vert^2 \Vert x\Vert^2 $, and the rest term satisfies $\vert r(x)-r(y)\vert\leq 3L\Vert x-y\Vert$, for any $x,y\in U$.
\end{proof}

\subsection{Weak convergence of stochastic processes}\label{appendix Measurability and convergence of stochastic processes}

Here, we recollect some definitions and results on weak convergence of stochastic processes from \cite{vanderVaart1996} Section 1.5, and Chapter 2 in \cite{Billingsley1968}. Let $K\subset\mathbb{R}^p$ be a compact set, and consider the metric space $\ell^{\infty}(K)$ of bounded functions from $K$ to $\mathbb{R}$, equipped with the distance 
$$d(z_1,z_2):=  \sup_{u\in K}\vert z_1(u)-z_2(u)\vert.$$ The metric induces a Borel $\sigma-$ field generated by the open sets in $\ell^{\infty}(K)$. A stochastic process on $K$ is an indexed collection of random variables $\{V(u): u\in K\}$ defined on the same probability space $(\Omega,\mathcal{F},\mathbb{P})$, (i.e. for every $u\in K$, $\omega\mapsto V(u)(\omega)$ is a measurable map from $\Omega$ to $\mathbb{R}$).

The space of continuous functions $C(K)$ is a separable complete subspace of $\ell^{\infty}(K)$, and if a stochastic process $V$ has continuous sample paths, that is $V(\cdot)(\omega)\in C(K)$ for every $\omega\in\Omega$, it is also a Borel measurable map $V:\Omega \rightarrow C(K)$, see for instance \cite{vanderVaart1996}. 

In our context, the loss functions $x\mapsto\ell(x,\theta)$ are measurable for every fixed $\theta$, and the observations $X_1,X_2,\dots$ are measurable maps $X_i:\Omega \rightarrow \mathbb{R}^d$. Thus $\omega\mapsto G_n(\theta)(\omega)=n^{-1}\sum_{i=1}^n \ell(X_i(\omega),\theta)$ 
is measurable for each fixed $\theta$ and $n\in\mathbb{N}$. Therefore
\begin{align*}
 V_n(u)(\omega)&=  n\left(G_n\left(\theta_0+\dfrac{u}{\sqrt{n}}\right)(\omega)-G_n(\theta_0)(\omega)\right)+\sqrt{n}\left(f\left(\theta_0+u/\sqrt{n}\right)-f(\theta_0)\right)
\end{align*}

is a stochastic process indexed by $u\in K$. Moreover, because we assume $\theta\mapsto\ell(x,\theta)$ to be continuous for every fixed $x$, the sample paths of $V_n$ are continuous $V_n(\cdot)(\omega)\in C(K)$, and hence $V_n:\Omega \rightarrow C(K)$ is a Borel measurable map. \footnote{Continuity of paths is convenient, as working in the separable $C(K)$ space avoids measurability issues that naturally arise in the non-separable $\ell^{\infty}(K)$ space. It is known that the empirical CDF is not measurable as a map from $\Omega$ to $\ell^{\infty}(K)$, as pointed out by Chibisov (1965). Nonetheless, if $\hat{\theta}_n$ is consistent, then the conclusions of Theorem~\ref{main pattern robust theorem} do not rely on continuity of the paths of $V_n$, because the argmax Theorem~3.2.2 in \cite{van1998asymptotic} is formulated within the J.~Hoffmann-J{\o}rgensen framework of outer measures and expectations.
}

\subsection{local stochastic differentiability}\label{appendix local stochastic differentiability is weak}
Here, we show that (\ref{stochastic differentiability}) implies (\ref{local stochastic differentiability}). Let $K\subset\mathbb{R}^p$ be a compact set. Consider any sequence of balls $U_n$ shrinking to $\theta_0$ such that $\theta_0+K/\sqrt{n}\subset U_n$ for every $n$.  We observe that $\Vert K\Vert:=sup\{\Vert  u\Vert: u\in K\}\geq n^{1/2}\Vert\theta-\theta_0\Vert$ for any $\theta\in \theta_0+K/\sqrt{n}$. This yields
\begin{equation*}
  \underset{\theta\in \theta_0+K/\sqrt{n}}{\operatorname{sup}} \dfrac{ \vert\nu_n r(\cdot,\theta)\vert}{1+ \Vert K\Vert}\leq\underset{\theta\in\theta_0+K/\sqrt{n}}{\operatorname{sup}}\hspace{0,1cm} \dfrac{\vert\nu_n r(\cdot,\theta)\vert}{1+n^{1/2}\Vert\theta-\theta_0\Vert}\leq \underset{\theta\in U_n}{\operatorname{sup}}\hspace{0,1cm} \dfrac{\vert\nu_n r(\cdot,\theta)\vert}{1+n^{1/2}\Vert\theta-\theta_0\Vert}\xrightarrow[n\rightarrow\infty]{p} 0,
\end{equation*}
by (\ref{stochastic differentiability}). Hence $\operatorname{sup}_{\theta\in U_n}\vert\nu_n r(\cdot,\theta)\vert$ goes to zero in probability, because $\Vert K\Vert$ is a fixed constant.
\subsection{Extention to pattern preserving regularizers}
Theorem~\ref{main pattern robust theorem} extends to penalties of the form $\tilde{f}(\theta)=f(\rho(\theta))$, where $\rho_{\theta_0}'$ is a differentiable map that ``preserves'' the pattern space around $\rho(\theta_0)$.
\begin{corollary}
    Assume that conditions in Theorem \ref{main pattern robust theorem} hold. Consider a minimizer \eqref{main objective} with a composed penalty $\tilde{f}(\theta)=f(\rho(\theta))$, where $f$ takes the form of the polyhedral gauge in \eqref{penalty form} and  $\rho: \Theta \mapsto \mathbb{R}^m$ is a map differentiable at $\theta_0$. Assume that for any $u_1,u_2\in\mathbb{R}^p$ such that $I_f(u_1)=I_f(u_2)$, we have
    \begin{equation}\label{pattern preservation}
        I_f(\rho(\theta_0)+\varepsilon \rho_{\theta_0}'(u_1))=I_f(\rho(\theta_0)+\varepsilon \rho_{\theta_0}'(u_2))
    \end{equation}
    for all small $\varepsilon>0$.    
    Then $\hat{u}_n=\sqrt{n}(\hat{\theta}_n-\theta_0)$ converges in distribution to $\hat{u}$, which minimizes
    \begin{equation*}
    V(u) = \dfrac{1}{2}u^{T}Cu-u^{T}W+\tilde{f}'({\theta_0};u),
\end{equation*}
    where $\tilde{f}'(\theta_0;u)= f'(\rho(\theta_0);\rho'_{\theta_0}(u))$, and $I_f(\hat{u}_n)$ converges to $I_f(\hat{u})$, as in \eqref{pattern convergence}.
\end{corollary}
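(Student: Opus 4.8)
The plan is to reduce the statement to Theorem~\ref{main pattern robust theorem} by substituting the composed directional derivative for $f'(\theta_0;u)$ and checking that each step of that proof survives the substitution. The first ingredient is a chain rule for directional derivatives. Since $f$ is a polyhedral gauge plus a smooth convex term it is locally Lipschitz, and differentiability of $\rho$ at $\theta_0$ gives $\rho(\theta_0+tu)=\rho(\theta_0)+t\,\rho'_{\theta_0}(u)+o(t)$. Feeding this expansion into the Lipschitz bound for $f$ and letting $t\downarrow 0$ yields $\tilde f'(\theta_0;u)=f'(\rho(\theta_0);\rho'_{\theta_0}(u))$; the same estimate, made uniform over a compact $K$, delivers $\sqrt n\bigl(\tilde f(\theta_0+u/\sqrt n)-\tilde f(\theta_0)\bigr)\to\tilde f'(\theta_0;u)$ in $\ell^\infty(K)$, which is the only place the penalty enters Step~3 of the original proof. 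Because $\rho'_{\theta_0}$ is linear and $w\mapsto f'(\rho(\theta_0);w)$ is convex and positively homogeneous, $u\mapsto\tilde f'(\theta_0;u)$ is convex, so the limiting objective $V$ is strictly convex (by positive definiteness of $C$) with a unique minimizer $\hat u$.

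With this in hand the convergence in distribution is immediate. Steps~1--3 of the proof of Theorem~\ref{main pattern robust theorem} use the penalty only through its continuity (for the Glivenko--Cantelli/consistency argument) and its local Lipschitz behaviour around $\theta_0$ with the displayed rate (for the $O_p(n^{-1/2})$ bound of Step~2), and $\tilde f=f\circ\rho$ inherits both from the local Lipschitzness of $f$ and the differentiability of $\rho$. Lemma~\ref{Jonas lemma} governs the loss term and is untouched, so $V_n$ converges weakly to $V$ in $\ell^\infty(K)$ and $\hat u_n$ converges in distribution to $\hat u$ exactly as before.

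The substantive point is the pattern statement, for which I would re-run Step~5 with $\tilde f'$ in place of $f'$. The subdifferential of the limit is $\partial V(u)=Cu-W+\partial_u\tilde f'(\theta_0;u)$, and the chain rule for subgradients of a composition with a linear map (valid without a constraint qualification since $f'(\rho(\theta_0);\cdot)$ is finite-valued and polyhedral) gives $\partial_u\tilde f'(\theta_0;u)=(\rho'_{\theta_0})^{T}\,\partial f'\!\bigl(\rho(\theta_0);\rho'_{\theta_0}(u)\bigr)$. The exposed face of $\partial f(\rho(\theta_0))$ selected by $\partial f'(\rho(\theta_0);w)$ depends on $w$ only through $\lim_{\varepsilon\downarrow0}I_f(\rho(\theta_0)+\varepsilon w)$, so condition~\eqref{pattern preservation} is precisely what forces $\partial_u\tilde f'(\theta_0;u)$ to be constant on each pattern class $\{u:I_f(u)=I_f(\hat u)\}$. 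This is the analogue of \eqref{key subdifferential equation in Step 5}: writing $P$ for the projection onto the pattern space $\langle U_{\hat u}\rangle$, one gets $\partial_u\tilde f'(\theta_0;P\hat u_n)=\partial_u\tilde f'(\theta_0;\hat u)$ as soon as $\hat u_n$ is close enough to $\hat u$ that $I_f(P\hat u_n)=I_f(\hat u)$. The margin inequality \eqref{margin for the W} and the Lipschitz control of the remainder $R'_n$ afforded by Lemma~\ref{Jonas lemma} then reproduce, verbatim, the chain of inequalities of Step~5 and yield $\hat u_n=P\hat u_n$, hence $I_f(\hat u_n)=I_f(\hat u)$, with probability tending to one.

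The main obstacle is this third step: one must verify that \eqref{pattern preservation} translates exactly into constancy of the selected face $\partial f'\!\bigl(\rho(\theta_0);\rho'_{\theta_0}(\cdot)\bigr)$ on pattern classes, and that the induced pattern space $\langle U_{\hat u}\rangle$ and its projection $P$ behave as in the polyhedral case. This is delicate because $\tilde f$ need not itself be of the form \eqref{penalty form}, so its subdifferential geometry cannot be read from an explicit gauge representation and must instead be recovered indirectly through $\rho'_{\theta_0}$; everything else is a transcription of the argument already established for Theorem~\ref{main pattern robust theorem}.
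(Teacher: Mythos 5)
Your proposal is correct and follows essentially the same route as the paper's proof: identify $\tilde f'(\theta_0;u)=f'(\rho(\theta_0);\rho'_{\theta_0}(u))$ and $\partial_u\tilde f'(\theta_0;u)=(\rho'_{\theta_0})^{T}\partial f'(\rho(\theta_0);\rho'_{\theta_0}(u))$, observe that Steps 1--4 of Theorem~\ref{main pattern robust theorem} go through unchanged, and use \eqref{pattern preservation} to obtain the analogue of \eqref{key subdifferential equation in Step 5} so that Step~5 runs verbatim. The ``main obstacle'' you flag --- that the selected face $\partial f'(\rho(\theta_0);w)$ depends on $w$ only through the limiting pattern $\lim_{\varepsilon\downarrow 0}I_f(\rho(\theta_0)+\varepsilon w)$ --- is exactly the fact the paper invokes, resolving it by citation to \cite{hejny2024unveiling} rather than by a new argument.
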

\begin{proof}
    The directional derivative in (\ref{main objective V(u)}) becomes $\tilde{f}'(\theta_0;u)= f'(\rho(\theta_0);\rho'_{\theta_0}(u))$ and the subdifferential $\partial_u \tilde{f}'(\theta_0;u)=(\rho_{\theta_0}')^T \partial f'(\rho(\theta_0); \rho_{\theta_0}'(u))$. Steps 1. to 4. in the proof follow analogously for $\tilde{f}$ as for $f$. Consequently, $\hat{u}_n$ converges in distribution to $\hat{u}$, and we can assume Skorokhod coupling so that $\hat{u}_n$ converges to $\hat{u}$ almost surely. This means that $I(P\hat{u}_n)=I(\hat{u})$, for all sufficiently large $n$, where $P$ is a projection onto the $f-$ pattern space of $\hat{u}$. Now, crucially, for any $u_1,u_2$ with $I(u_1)=I(u_2)$, \eqref{pattern preservation} implies that $\partial f'(\rho(\theta_0);\rho'_{\theta_0}(u_1))=\partial f'(\rho(\theta_0);\rho'_{\theta_0}(u_2))$, for details see the connection between the subdifferential of the directional derivative and the limiting pattern in \cite{hejny2025unveiling}.  Consequently,
    \begin{align*}
        \partial \tilde{f}'(\theta_0;P\hat{u}_n)&=(\rho_{\theta_0}')^T\partial f'(\rho(\theta_0);\rho'_{\theta_0}(P\hat{u}_n))\\&=(\rho_{\theta_0}')^T\partial f'(\rho(\theta_0);\rho'_{\theta_0}(\hat{u}))=\partial \tilde{f}'(\theta_0;\hat{u})
    \end{align*} as in (\ref{key subdifferential equation in Step 5}) and also Step 5. follows. Consequently, Theorem \ref{main pattern robust theorem} holds for the penalty $\tilde{f}$, and $\hat{u}_n$ converges in distribution and in $f-$ pattern to $\hat{u}$.
\end{proof}

\subsection{Optimality lemma for polyhedral gauges}
\begin{lemma}\label{lemma on subdifferential variational property}
Let $v_1,\dots,v_k\in\mathbb R^p$ be fixed vectors and set
\[
h(u)=\max\{v_1^Tu,\dots,v_k^Tu\}.
\]
Let $C\in\mathbb R^{p\times p}$ be symmetric positive definite, and for $w\in\mathbb R^p$ let $\hat u(w)$ be the unique minimizer of
\begin{equation}\label{lemma objective function}
\frac12 u^TCu-u^Tw+h(u).
\end{equation}
Let $W$ be a random vector in $\mathbb R^p$ with bounded density w.r.t. Lebesgue measure. Then for any $\varepsilon>0$ there exists $\delta>0$ such that
\begin{equation*}
\mathbb{P}\big[B_{\delta}^*(0)\subset C\hat{u}(W)-W+\partial h(\hat{u}(W))\big]>1-\varepsilon,
\end{equation*}
where $B_{\delta}^*(0)$ is the Euclidean ball of radius $\delta$ inside the linear subspace $\mathrm{par}(\partial h(\hat{u}(W)))$ parallel to the affine hull of $\partial h(\hat{u}(W))$.
\end{lemma}
\begin{proof}
To prove the result, we split the argument into several steps. First, we partition the range of $W$ into disjoint, relatively open polyhedral cells, indexed by the active set of the minimizer of \eqref{lemma objective function}. Second, since $W$ has a bounded density, with probability arbitrarily close to one, neither $W$ nor $r(W) := W - C\hat{u}(W)$ lies on a cell (or face) boundary. On each cell, the map $w \mapsto \hat{u}(w)$ is affine, and hence $r(w)$ is affine as well. Restricting to a compact set $K_\eta$ at a positive distance from the boundaries and using continuity yields a uniform $\delta > 0$ such that $d(r(w), \partial S_I) \ge \delta$, where $S_I = \operatorname{conv}\{v_i : i \in I\}$. Translating this interior margin gives
\[
B_\delta^*(0)\subset C\hat u(W)-W+\partial h(\hat u(W))
\]
with probability at least $1-\varepsilon$.
\vskip 0.3cm
\textbf{1. Partition into polyhedral cells.} \\
Let 
\begin{equation*}
    \mathcal{R}_I:=\{w: I(\hat{u}(w))=I\},
\end{equation*} 
where $I(u):=\{i: v_i^Tu=h(u)\}$ is the $h-$pattern at $u\in\mathbb{R}^p$. We show the sets $\{\mathcal R_I\}_I$ form a finite partition of $\mathbb R^p$ into relatively open polyhedral cells.
    
Fix a pattern $I$. Let $V_I$ be the $p\times |I|$ matrix whose columns are the $v_i$ for $i\in I$, and let $V_{I^c}$ be the $p\times (k-|I|)$ matrix with columns $v_i$ for $i\notin I$. By first-order (KKT) optimality, $u$ is a minimizer of \eqref{lemma objective function} if and only if
\begin{equation*}
    w-Cu\in\partial h(u)= \operatorname{con}\{v_i: i\in I(u)\}.
\end{equation*}

Suppose $u$ is a minimizer of \eqref{lemma objective function} with pattern $I(u)=I$. Set $t:=h(u)$. By definition of the pattern we have $
V_I^T u = t\mathbf1$ and $V_{I^c}^T u < t\mathbf1$. Moreover the KKT condition above gives $w-Cu\in\operatorname{conv}\{v_i:i\in I\}$, hence there exists $\mu\ge0$ with $\mathbf1^T\mu=1$ such that $w-Cu=V_I\mu$. Thus the tuple $(u,t,\mu,w)$ satisfies the linear system

\begin{equation}\label{optimality polyhedron}
\begin{cases}
V_I^Tu = t\mathbf{1},\\[2pt]
V_{I^c}^Tu < t\mathbf{1},\\[2pt]
w-C u = V_I\mu,\\[2pt]
\mu\ge 0,\ \mathbf{1}^T\mu = 1.
\end{cases}
\end{equation}
Conversely, given any fixed $w\in\mathbb{R}^p$, suppose there exist $u,t,\mu$ such that $(u,t,\mu,w)$ satisfies \eqref{optimality polyhedron}. Then $V_I^T u=t\mathbf1$ and $V_{I^c}^Tu<t\mathbf1$ imply $h(u)=t$ and $I(u)=I$. Also $w-Cu=V_I\mu\in\operatorname{conv}\{v_i:i\in I\}$, so $w-Cu\in\partial h(u)$, hence $u$ is a minimizer of \eqref{lemma objective function} with pattern $I$, and $w\in\mathcal{R}_I$. 

Let $\mathcal P_I$ be the set\footnote{Note that for patterns $I$ for which there is not $u$ with $I(u)=I$, the set $\mathcal{P}_{I}$ is empty. } of all $(u,t,\mu,w)$ satisfying \eqref{optimality polyhedron}. Then
\begin{equation*}
    \mathcal{R}_I=\{w: \exists\,u,t,\mu \text{ such that } (u,t,\mu,w)\in \mathcal{P}_I\}.
\end{equation*}
(Here $\mathcal P_I$ is relatively open because of the strict inequalities; replacing $<$ by $\le$ yields the closed polyhedron $\overline{\mathcal P_I}$.) The projection $\pi(\overline{\mathcal P_I})$ onto the $w$-coordinates equals $\overline{\mathcal R_I}$, hence each $\overline{\mathcal R_I}$ is a polyhedron. Since there are finitely many patterns $I$, the sets $\{\mathcal R_I\}_I$ give a finite partition of $\mathbb R^p$ into relatively open cells whose closures are polyhedra.
\vskip 0.4cm

\textbf{2. $W$ avoids the boundaries of $\mathcal R_I$.} \\
Let $\mathcal B=\bigcup_I\partial\mathcal R_I$ denote the union of all cell boundaries. Each $\partial\mathcal R_I$ is contained in a finite union of lower-dimensional polyhedral sets, so $\mathcal B$ has Lebesgue measure zero. Since $W$ has a bounded density w.r.t. Lebesgue measure, for any $\eta>0$ the tubular neighborhood $\mathcal B^\eta=\{w: d(w,\mathcal{B})\leq \eta\}$ satisfies $\mathbb P[W\in\mathcal B^\eta]\to 0$ as $\eta\downarrow0$. (Here we use the standard Euclidean metric $d(x,y)=\|x-y\|$.)

Fix $\varepsilon>0$ and choose $R>0$ such that $\mathbb P(\|W\|\le R)>1-\varepsilon/2$. Define
\begin{equation}\label{compact set away from boundary}
K_{\eta}=\{w\in\mathbb R^p:\ \|w\|\le R,\ d(w,\mathcal B)\ge\eta\},
\end{equation}
and pick $\eta>0$ sufficiently small so that $\mathbb P[W\in K_{\eta}]\ge 1-\varepsilon$.
\vskip 0.5cm

\vskip 0.4cm
\textbf{3. Interior points.} \\
For a given index set $I$ denote $S_I:=\operatorname{conv}\{v_i:i\in I\}=\partial h(u)$ when $I=I(u)$. For $w\in\mathcal R_I$ write $\hat u=\hat u(w)$ and define
\[
r(w):=w-C\hat u(w)\in S_I.
\]
Let $\mathrm{ri}(A)$ denotes the \emph{relative interior} of a set $A\subset\mathbb R^p$,
we show
\begin{equation}\label{relative interior implication}
w\in \mathrm{ri}(\mathcal R_I)\quad\Longrightarrow\quad r(w)\in \mathrm{ri}(S_I).
\end{equation}
Suppose for contradiction that $w\in\mathrm{ri}(\mathcal R_I)$ but $r(w)\notin\mathrm{ri}(S_I)$. Then $r(w)$ lies in the relative boundary of $S_I$, hence there exists a strict subset $J\subsetneq I$ and weights $\rho\ge0$, $\sum_{j\in J}\rho_j=1$, supported on $J$, such that $r(w)=V_J\rho$. But then $(\hat u,t,\rho,w)$ satisfies the non-strict system \eqref{optimality polyhedron} for index set $J$, so $w\in\overline{\mathcal R_J}$. Since $\mathcal R_I\cap\mathcal R_J=\varnothing$ for $I\ne J$, this implies $w\in\partial\mathcal R_I$, contradicting $w\in\mathrm{ri}(\mathcal R_I)$. Thus \eqref{relative interior implication} holds.

\vskip 0.4cm
\textbf{4. Compactness argument and uniform margin.} \\
On the interior of each cell $\mathcal R_I$ the mapping $w\mapsto \hat u(w)$ is affine (hence continuous), so the map $r(w)=w-C\hat u(w)$ is affine and the function
\[
g_I(w):=d\big(r(w),\partial S_I\big)
\]
is continuous on $\operatorname{ri}(\mathcal R_I)$ (distance measured within the affine hull of $S_I$). Define $g(w)=g_I(w)$ for $w\in\operatorname{ri}(\mathcal R_I)$.

On the compact set $K_\eta$ defined in \eqref{compact set away from boundary} every point lies in the relative interior of some cell, and by \eqref{relative interior implication} we have $g(w)>0$ for every $w\in K_\eta$. By continuity and compactness there exists $\delta>0$ such that
\[
g(w)\ge \delta\quad\text{for all }w\in K_\eta,
\]
i.e.
\[
d\big(w-C\hat u(w),\partial S_I\big)\ge\delta\quad\text{whenever }w\in K_\eta\cap\mathcal R_I.
\]
Equivalently, for such $w$,
\[
\big(w-C\hat u(w)\big)+B_\delta^*(0)\subset S_I,
\]
where $B_\delta^*(0)$ denotes the Euclidean ball of radius $\delta$ inside the linear subspace $\mathrm{par}(S_I)=\mathrm{par}(\partial h(\hat u(w)))$. Translating,
\[
B_\delta^*(0)\subset C\hat u(w)-w + \partial h(\hat u(w)).
\]

\vskip 0.4cm
\textbf{5. Conclude.} \\
Since $\mathbb P[W\in K_\eta]\ge 1-\varepsilon$, for the $\delta>0$ above we obtain
\[
\mathbb{P}\big[B_{\delta}^*(0)\subset C\hat{u}(W)-W+\partial h(\hat{u}(W))\big]
\ \ge\ \mathbb P[W\in K_\eta]\ \ge\ 1-\varepsilon,
\]
which completes the proof.

\end{proof}

\end{appendix}
\end{document}